\definecolor{mycolor}{rgb}{0.122, 0.435, 0.698}
\newcommand{\Dx}{{\Delta x}}
\newcommand{\Dt}{{\Delta t}}
\newcommand{\hf}{{\frac{1}{2}}}
\newcommand{\jphf}{{j+\hf}}
\newcommand{\jmhf}{{j-\hf}}
\newcommand*{\Z}{\mathbb{Z}}
\pgfplotsset{width=.53\textwidth,compat=1.12}
\definecolor{skyblue1}{rgb}{0.447,0.624,0.812}
\definecolor{plum1}{rgb}{0.678,0.498,0.659}
\definecolor{scarletred1}{rgb}{0.937,0.161,0.161}
\definecolor{myblue}{HTML}{1e77b4}
\definecolor{myorange}{HTML}{ff7f0f}
\newcommand*{\calF}{\mathcal{F}}
\newcommand*{\bbD}{\mathbb{D}}
\newcommand*{\bbE}{\mathbb{E}}
\newcommand*{\bbP}{\mathbb{P}}
\newcommand*{\diff}{\mathop{}\!\mathrm{d}}
\newcommand*{\TV}{\mathrm{TV}}
\newcommand*{\BV}{\mathrm{BV}}
\newcommand*{\mcom}{\,\text{,}}
\newcommand*{\mper}{\,\text{.}}
\newcommand*{\R}{\mathbb{R}}
\newcommand*{\naturals}{\mathbb{N}}
\newcommand*{\norm}[1]{\left\lVert #1 \right\rVert}
\newcommand*{\abs}[1]{\left\lvert #1 \right\rvert}
\DeclareMathOperator{\sign}{sgn}
\newcommand*{\from}{\colon}
\newcommand*{\compose}{\circ}
\newcommand{\eqdef}{\coloneqq}
\newcommand{\cell}{\mathcal{C}}
\newtheorem{theorem}{Theorem}
\newtheorem{proposition}[theorem]{Proposition}
\newtheorem{remark}[theorem]{Remark}
\newtheorem{definition}[theorem]{Definition}
\newtheorem{lemma}[theorem]{Lemma}
\newtheorem{corollary}[theorem]{Corollary}
\newtheorem{assumption}[theorem]{Assumption}
\numberwithin{equation}{section}
\numberwithin{theorem}{section}
\title{Multilevel Monte Carlo finite volume methods for\\ random conservation laws
with discontinuous flux}
\author{\textsc{Jayesh Badwaik}\thanks{Department of Mathematics, University of W\"urzburg, Germany (\texttt{badwaik.jayesh@gmail.com}, \texttt{klingen@mathematik.uni-wuerzburg.de}).}
\and \textsc{Christian Klingenberg}\footnotemark[1]
\and \textsc{Nils Henrik Risebro}\thanks{Department of Mathematics, University of Oslo, Norway 
  (\texttt{nilshr@math.uio.no}).}
\and \textsc{Adrian\,M. Ruf}\thanks{Seminar for Applied Mathematics, ETH Z\"urich, Switzerland (\texttt{adrian.ruf@sam.math.ethz.ch})
\newline
The work of JB was supported by German Priority Programme 1648
(SPPEXA) and the ModCompShock EU Project. The work of NHR was
performed while visiting the University of W\"{u}rzburg supported by the Giovanni-Prodi Chair Position. JB, CK, and NHR were supported by DAAD (German
Academic Exchange Service) and the Research Council of Norway.
}
}
\date{\today}
\begin{document}

\maketitle
\begin{abstract}
	We consider conservation laws with discontinuous flux where the initial datum, the flux function, and the discontinuous spatial dependency coefficient are subject to randomness.
	We establish a notion of random adapted entropy solutions to these equations and prove well-posedness provided that the spatial dependency coefficient is piecewise constant with finitely many discontinuities. In particular, the setting under consideration allows the flux to change across finitely many points in space whose positions are uncertain.
	We propose a single- and multilevel Monte Carlo method based on a finite volume approximation for each sample.
	Our analysis includes convergence rate estimates of the resulting Monte Carlo and multilevel Monte Carlo finite volume methods as well as error versus work rates showing that the multilevel variant outperforms the single-level method in terms of efficiency. 
	We present numerical experiments motivated by two-phase reservoir simulations for reservoirs with varying geological properties.
\end{abstract}
\paragraph{Key words.} uncertainty quantification, conservation laws, discontinuous flux, numerical methods
\paragraph{AMS subject classification.} 35L65, 35R05, 65C05, 65M12


\section{Introduction}
This paper concerns uncertainty quantification for conservation laws with \emph{discontinuous flux} of the form
\begin{gather}
  \begin{aligned}
    u_t + f(k(x),u)_x =0,& &&x\in\R,\ t>0,\\
    u(x,0)= u_0(x),& &&x\in\R\mper
  \end{aligned}
  \label{eqn: Deterministic Cauchy problem}
\end{gather}
Here, $u\from\R\times [0,\infty)\to\R$ is the unknown and $f\in\mathcal{C}^2(\R^2;\R)$ is the flux function having a possibly \emph{discontinuous} spatial dependency through the coefficient $k$. In particular, we will assume that the initial datum $u_0$ is in $(\mathrm{L}^\infty\cap\BV)(\R)$, the flux $f$ is strictly increasing in $u$, and the coefficient $k$ is piecewise constant with finitely many discontinuities. Going back to~\eqref{eqn: Deterministic Cauchy problem}, this amounts to switching from one $u$-dependent flux to another across finitely many points in space.

Equations of type~\eqref{eqn: Deterministic Cauchy problem} arise in a number of areas of application including vehicle traffic flow in the presence of abruptly varying road conditions (see \cite{lighthill1955kinematic}), polymer flooding in oil recovery (see \cite{Shen:2017aa}), two-phase flow through heterogeneous porous media (see \cite{gimse1992solution,gimse1993note,risebro1991front}), and sedimentation processes (see \cite{diehl1996conservation,burger2003front}).

Even in the absence of flux discontinuities, and even if the initial datum is smooth, solutions of~\eqref{eqn: Deterministic Cauchy problem} develop discontinuities in finite time and for this reason weak solutions are sought. Weak solutions to~\eqref{eqn: Deterministic Cauchy problem} are not unique, so the weak formulation of the problem is augmented with an additional entropy condition. In the case where $x\mapsto f(k(x),u)$ is smooth, uniqueness follows from the classical Kru\v{z}kov entropy conditions \cite{kruvzkov1970first}. In the presence of spatial flux discontinuities, standard Kru\v{z}kov entropy conditions no longer make sense. This difficulty is usually resolved by requiring that Kru\v{z}kov entropy conditions hold away from the spatial flux discontinuities and imposing additional jump conditions along the spatial interfaces \cite{gimse1991riemann,gimse1992solution,diehl1996conservation,klingenberg1995convex,klausen1999stability,Towers1,Towers2,karlsen2003l1,karlsen2004convergence,adimurthi2005optimal,andreianov2011theory} or by adapting the Kru\v{z}kov entropy conditions in a suitable way \cite{Baiti1997,audusse2005uniqueness,Piccoli/Tournus,BadwaikRuf2020,TOWERS20205754,Ruf2020}. In the present paper we will focus on the second approach of so-called adapted entropy solutions for which we need to require that the flux function $f$ is strictly monotone in $u$.

In the last two decades, there has been a large interest in the numerical approximation of entropy solutions of~\eqref{eqn: Deterministic Cauchy problem} under various assumptions on $k$ and $f$.
We refer to \cite{Towers1,Towers2,karlsen2002upwind,karlsen2004convergence,adimurthi2005optimal,Sid2005,ADIMURTHI2007310,wen2008convergence,burger2009engquist,karlsen2017convergence} and \cite{gimse1991riemann,gimse1992solution,gimse1993conservation,klingenberg1995convex,klingenberg2001stability,burger2003front,coclite2005conservation,holden2015front} for a partial list of references regarding finite volume methods respectively the front tracking method.
Specifically, in the adapted entropy framework we want to highlight the results of
\cite{BadwaikRuf2020,TOWERS20205754,ghoshal2020convergence} and \cite{Baiti1997,Piccoli/Tournus,Ruf2020} regarding finite volume methods and the front tracking method.

The classical paradigm for designing efficient numerical schemes assumes that \emph{data for~\eqref{eqn: Deterministic Cauchy problem}, i.e., the initial datum $u_0$, the flux $f$, and the spatial dependency coefficient $k$, are known exactly}.

However, in many situations of practical interest, there is an inherent uncertainty in the modeling and measurement of physical parameters. For example, in two-phase flow through a heterogeneous porous medium the position of the interface between two rock types is typically not known exactly. Often these parameters are only known up to certain statistical quantities of interest like the mean, variance, or higher moments. In such cases, a mathematical framework of~\eqref{eqn: Deterministic Cauchy problem} is required which allows for \emph{random data}.

For standard conservation laws without spatial flux dependency, i.e., for
\begin{gather}
  \begin{aligned}
    u_t + f(u)_x =0,& &&x\in\R,\ t>0,\\
    u(x,0)= u_0(x),& &&x\in\R,
  \end{aligned}
  \label{eqn: Standard conservation law}
\end{gather}
such a framework was developed in a series of papers allowing for random initial datum~\cite{Mishra2012}, random (spatially independent) flux~\cite{Mishra2016}, and even random source terms~\cite{mishra2013multi} and random diffusion~\cite{koley2017multilevel}.

The first aim of the current paper is to \emph{extend this mathematical framework to include scalar conservation laws with discontinuous flux with random discontinuous spatial dependency}. 
To that end, we define random entropy solutions and provide an existence and uniqueness result, which generalizes the well-posedness results for~\eqref{eqn: Standard conservation law} to the case of uncertain initial datum, flux, and discontinuous spatial dependency. In particular, our framework allows for uncertain positions of the flux discontinuities.

The second aim of this paper is to \emph{design fast and robust numerical algorithms for computing the mean of random entropy solutions of conservation laws with discontinuous flux}.
Specifically, we propose and analyze a multilevel combination of Monte Carlo (MC) sampling and a "pathwise" finite volume method (FVM) to approximate the mean of random entropy solutions of conservation laws with discontinuous flux. The multilevel Monte Carlo finite volume method (MLMCFVM) for~\eqref{eqn: Deterministic Cauchy problem} is non-intrusive (in the sense that it requires only repeated applications of existing solvers for input data samples) and easy to implement and to parallelize. Our analysis includes the proof of convergence rates at which the MCFVM and the MLMCFVM converge towards the mean of the random entropy solution of~\eqref{eqn: Deterministic Cauchy problem}. Moreover, we determine the number of MC samples needed to minimize the computational work for a given error tolerance.

We want to emphasize that the framework of adapted entropy solutions and more specifically the setting of the present paper is currently the only setting for which we simultaneously have existence \cite{TOWERS20205754}, uniqueness \cite{audusse2005uniqueness}, stability with respect to the modeling parameters \cite{Ruf2020}, and numerical methods with a provable convergence rate \cite{BadwaikRuf2020,Ruf2020} -- the essential components for an uncertainty quantification framework (cf. \cite{Mishra2016}).

The remainder of this paper is organized as follows. In \Cref{sec: prelim} we introduce preliminary results regarding the MC approximation of Banach space-valued random variables. \Cref{sec: Deterministic conservation laws} is devoted to a review of existence and stability results regarding entropy solutions of (deterministic) conservation laws with discontinuous flux of the form~\eqref{eqn: Deterministic Cauchy problem}. In \Cref{sec: ranconlaw} we introduce random entropy solutions of~\eqref{eqn: Deterministic Cauchy problem} where the initial datum $u_0$, the flux $f$, and the discontinuous coefficient $k$ are subject to randomness. In particular, we prove existence and uniqueness of random entropy solutions. In \Cref{sec: mlmcfvm}, we first review a FVM which was introduced in~\cite{BadwaikRuf2020} for the deterministic problem, prove certain stability estimates, and then extend the FVM to MC as well as MLMC versions for~\eqref{eqn: Deterministic Cauchy problem} with random parameters.
In \Cref{sec: numexp} we perform numerical experiments motivated by two-phase reservoir simulations for reservoirs with varying geological properties to validate our error estimates.
Finally, we summarize the findings of this paper in \Cref{sec: conclusion}.

\section{Preliminaries on the Monte Carlo method}\label{sec: prelim}

We first introduce some preliminary concepts which are needed in the
exposition. To that end, we follow \cite{Ledoux2013} and \cite{VanNeerven2008}, see also \cite[Sec. 2]{koley2017multilevel} and~\cite[Sec. 5]{cox2016convergence}.

Given a probability space $(\Omega, \calF, \bbP)$, a Banach space $V$, and a random variable
$X\from \Omega \to V$ we are interested in approximating the mean $\bbE[X]$ of $X$ via Monte Carlo sampling. To this end, let $(\hat{X}^i)_{i=1}^M$, $i=1,\ldots,M$, be $M$ independent, identically distributed samples  of $X$. Then, the Monte Carlo estimator $E_M[X]$ of $\bbE[X]$ is defined as the sample average
\begin{align*}\label{eqn: mc est}
  E_M[X] \eqdef  \frac{1}{M} \sum_{i=1}^M \hat{X}^i \mper
\end{align*}
We are interested in deriving a rate at which
\begin{equation*}
	\norm{\bbE[X] - E_M[X]}_{\mathrm{L}^q(\Omega;V)} = \bbE[ \norm{\bbE[X] - E_M[X]}_V^q]^{\frac{1}{q}}
\end{equation*}
converges as $M\to\infty$ for some $1\leq q<\infty$ and some Banach space $V$ (typically a Lebesgue space). For general Banach spaces $V$ such convergence rate estimates depend on the type of the Banach space.

\begin{definition}[Banach space of type $q$ {\cite[p. 246]{Ledoux2013}}]
 Assume that $\Omega$ permits a sequence of independent Rademacher random variables $Z_i, i \in \naturals$. We say that a Banach space $V$ is a Banach space of type $1\leq q\leq 2$ if there is a constant $\kappa > 0$ such that for all finite sequences $(x_i)_{i=1}^M \subseteq V$
  \begin{align*}
    \left(\bbE\norm{\sum_{i=1}^M Z_i x_i }_V^q\right)^{\frac{1}{q}} \leq \kappa
    \left(
      \sum_{i=1}^M \norm{x_i}^q_V
    \right)^{\frac{1}{q}} \mper
  \end{align*}
  We will refer to $\kappa$ as the type constant of $V$.
\end{definition}
Every Banach space is a Banach space of type $1$ and every Hilbert space a Banach space of type $2$ \cite[Thm. 9.10]{Ledoux2013}. Moreover, $\mathrm{L}^p$ spaces are Banach spaces of type $q=\min(2,p)$ for $1\leq p<\infty$ \cite[p. 247]{Ledoux2013}.
We will need the following results regarding Lebesgue spaces of functions with values in a Banach space of type $q$.
\begin{lemma}[{\cite[p. 247]{Ledoux2013}}]
  Let $1 \leq r \leq \infty$, $(\Omega, \calF, \bbP)$ be a measure space,
  and $V$ be a Banach space of type $q$. Then the space
  $\mathrm{L}^r(\Omega, V)$ is a Banach space of type $\min(r,q)$.
\end{lemma}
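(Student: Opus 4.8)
\emph{Proof strategy.} The plan is to verify the defining inequality of a Banach space of type $s\eqdef\min(r,q)$ for $\mathrm{L}^r(\Omega;V)$ directly, using Fubini's theorem to transfer the estimate to the fibres $V$, where the type-$q$ hypothesis is at our disposal. Fix a finite family $F_1,\dots,F_M\in\mathrm{L}^r(\Omega;V)$ and a Rademacher sequence $(Z_i)_{i\in\naturals}$ on an auxiliary probability space, and let $\bbE_Z$ denote expectation with respect to $(Z_i)$; since only $Z_1,\dots,Z_M$ enter, $\bbE_Z$ is merely the average over the $2^M$ sign patterns, so no measurability issue arises and Tonelli's theorem applies without fuss. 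For $1\le r<\infty$ the starting identity is
\begin{equation*}
  \bbE_Z\norm{\sum_{i=1}^M Z_i F_i}_{\mathrm{L}^r(\Omega;V)}^r
  = \int_\Omega \bbE_Z\norm{\sum_{i=1}^M Z_i F_i(\omega)}_V^r \rmd\bbP(\omega)\mper
\end{equation*}
From here I would split into the regimes $r\le q$ and $q\le r$.

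If $r\le q$, then $s=r$, and it suffices to note that $V$, being of type $q$, is also of type $r$ with the same constant $\kappa$ (monotonicity of moments on a probability space together with the embedding $\ell^q\hookrightarrow\ell^r$); feeding the fibrewise type-$r$ bound $\bbE_Z\norm{\sum_i Z_iF_i(\omega)}_V^r\le\kappa^r\sum_i\norm{F_i(\omega)}_V^r$ into the identity above and integrating in $\omega$ yields $\bbE_Z\norm{\sum_i Z_iF_i}_{\mathrm{L}^r(\Omega;V)}^r\le\kappa^r\sum_i\norm{F_i}_{\mathrm{L}^r(\Omega;V)}^r$, the type-$r$ inequality. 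If instead $q\le r$, then $s=q$, and the difficulty is that type $q$ on $V$ controls only the $q$-th Rademacher moment on each fibre while the identity produces the $r$-th moment. To bridge this I would invoke the Kahane--Khintchine inequality (all $\mathrm{L}^p$-norms, $0<p<\infty$, of a Banach-space-valued Rademacher sum are mutually comparable), which provides a constant $C_{q,r}$ with $(\bbE_Z\norm{\sum_i Z_ix_i}_V^r)^{1/r}\le C_{q,r}(\bbE_Z\norm{\sum_i Z_ix_i}_V^q)^{1/q}$ for all $(x_i)\subseteq V$; chained with type $q$ this gives, fibrewise, $\bbE_Z\norm{\sum_i Z_iF_i(\omega)}_V^r\le(C_{q,r}\kappa)^r\big(\sum_i\norm{F_i(\omega)}_V^q\big)^{r/q}$. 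Integrating in $\omega$, recognising the right-hand side as $(C_{q,r}\kappa)^r\norm{\sum_i\norm{F_i(\cdot)}_V^q}_{\mathrm{L}^{r/q}(\Omega)}^{r/q}$, and applying Minkowski's inequality in $\mathrm{L}^{r/q}(\Omega)$ (valid since $r/q\ge1$) to bound $\norm{\sum_i\norm{F_i(\cdot)}_V^q}_{\mathrm{L}^{r/q}(\Omega)}\le\sum_i\norm{F_i}_{\mathrm{L}^r(\Omega;V)}^q$, I obtain $(\bbE_Z\norm{\sum_i Z_iF_i}_{\mathrm{L}^r(\Omega;V)}^r)^{1/r}\le C_{q,r}\kappa\big(\sum_i\norm{F_i}_{\mathrm{L}^r(\Omega;V)}^q\big)^{1/q}$; a last use of Jensen's inequality (again $q\le r$) lowers the moment on the left from $r$ to $q$ and produces the type-$q$ inequality with constant $C_{q,r}\kappa$.

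The step I expect to be the genuine obstacle is precisely this moment mismatch in the case $q<r$: Fubini unavoidably returns an $r$-th moment of the $V$-valued Rademacher sum on each fibre, whereas the hypothesis is about the $q$-th moment, and the Kahane--Khintchine equivalence of moments appears to be the only clean way to reconcile the two -- everything else is routine bookkeeping with Jensen's inequality, Tonelli's theorem, Minkowski's inequality, and the elementary $\ell^q\hookrightarrow\ell^r$ embedding. The endpoint $r=\infty$ behaves differently and would need a separate, more delicate argument; since only $1\le r<\infty$ is used in the sequel, I would state and apply the result in that range.
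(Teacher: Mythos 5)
The paper offers no proof of this lemma at all --- it is quoted verbatim from Ledoux--Talagrand, p.~247 --- so there is no internal argument to compare against. Your proposal is a correct, self-contained proof along the standard lines: Tonelli to push the Rademacher average onto the fibres, the fibrewise type inequality for $V$, and, in the only non-routine case $q\le r$, the Kahane--Khintchine equivalence of moments to reconcile the $r$-th Rademacher moment that Fubini produces with the $q$-th moment that the type-$q$ hypothesis controls, followed by the triangle inequality in $\mathrm{L}^{r/q}(\Omega)$ and a final Jensen step to lower the outer moment from $r$ back to $q$. Both cases check out, including the observation that type $q$ implies type $r$ with the same constant when $r\le q$. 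Your reservation about the endpoint is also well placed: taken literally at $r=\infty$ the lemma would assert that $\mathrm{L}^\infty(\Omega;V)$ inherits type $q$, which already fails for $V=\R$ and nonatomic $\Omega$ (where $\mathrm{L}^\infty$ has only trivial type), so restricting the proof to $1\le r<\infty$ --- the only range the paper actually uses --- is the right reading of the statement.
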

\begin{proposition}[{\cite[Prop. 9.11]{Ledoux2013}}]
  \label{thm: expectation sum of iid rv}
  Let $V$ be a Banach space of type $q$ with type constant $\kappa$.
  Then, for every finite sequence $(X_i)_{i=1}^M$ of independent mean zero random
  variables in $\mathrm{L}^q(\Omega, V)$, we have
  \begin{equation*}
    \bbE\left[\, \norm{ \sum_{i=1}^M X_i}_V^q \,\right] \leq (2\kappa)^q \sum_{i=1}^M \bbE \left[\, \norm{X_i}_V^q\,\right] \mper
  \end{equation*}
\end{proposition}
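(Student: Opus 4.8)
The plan is to establish the inequality by the classical symmetrization argument, which reduces a sum of independent mean-zero random variables to a Rademacher sum to which the definition of a Banach space of type $q$ applies verbatim.

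First I would introduce an independent copy $(X_i')_{i=1}^M$ of the sequence $(X_i)_{i=1}^M$. Since both sides of the asserted inequality depend only on the joint law of $(X_i)_{i=1}^M$, we may and do assume that such a copy, together with the Rademacher sequence $(Z_i)_{i=1}^M$, is defined on $(\Omega,\calF,\bbP)$ and independent of $(X_i)_{i=1}^M$. Because each $X_i'$ has mean zero, $\sum_{i=1}^M X_i$ equals the conditional expectation of $\sum_{i=1}^M (X_i - X_i')$ given $(X_i)_{i=1}^M$, so applying the conditional Jensen inequality to the convex map $v\mapsto\norm{v}_V^q$ and taking expectations gives
\begin{equation*}
  \bbE\left[\,\norm{\sum_{i=1}^M X_i}_V^q\,\right]\leq\bbE\left[\,\norm{\sum_{i=1}^M (X_i-X_i')}_V^q\,\right]\mper
\end{equation*}
Here the assumption $X_i\in\mathrm{L}^q(\Omega;V)$ is what guarantees that all quantities involved are finite.

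Next, the variables $Y_i\eqdef X_i-X_i'$ are independent and symmetric, hence the sequences $(Y_i)_{i=1}^M$ and $(Z_iY_i)_{i=1}^M$ have the same law, so that $\bbE[\,\norm{\sum_i Y_i}_V^q\,]=\bbE[\,\norm{\sum_i Z_iY_i}_V^q\,]$. Conditioning on $(X_i)_{i=1}^M$ and $(X_i')_{i=1}^M$ and applying the type-$q$ estimate to the (now deterministic) finite sequence $(Y_i)_{i=1}^M\subseteq V$ yields, after taking the outer expectation,
\begin{equation*}
  \bbE\left[\,\norm{\sum_{i=1}^M Y_i}_V^q\,\right]\leq\kappa^q\sum_{i=1}^M\bbE\left[\,\norm{X_i-X_i'}_V^q\,\right]\mper
\end{equation*}

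Finally I would discard the auxiliary copy using the triangle inequality together with the convexity of $t\mapsto t^q$ on $[0,\infty)$: one has $\norm{X_i-X_i'}_V^q\leq(\norm{X_i}_V+\norm{X_i'}_V)^q\leq 2^{q-1}(\norm{X_i}_V^q+\norm{X_i'}_V^q)$, and since $X_i'$ is distributed as $X_i$ this gives $\bbE[\,\norm{X_i-X_i'}_V^q\,]\leq 2^q\,\bbE[\,\norm{X_i}_V^q\,]$. Chaining the three displayed inequalities produces the claim with constant $\kappa^q\cdot 2^q=(2\kappa)^q$. The only genuinely delicate point is the symmetrization step itself — justifying the passage to an independent copy, the insertion of Rademacher signs without altering the relevant moments, and the conditional form of Jensen's inequality; once these are in place the remainder is elementary bookkeeping with the triangle inequality.
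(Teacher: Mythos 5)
Your symmetrization argument is correct and complete: the conditional Jensen step, the insertion of Rademacher signs via the symmetry of $X_i-X_i'$, the conditional application of the type-$q$ inequality, and the final bound $\bbE\bigl[\norm{X_i-X_i'}_V^q\bigr]\leq 2^{q-1}\cdot 2\,\bbE\bigl[\norm{X_i}_V^q\bigr]$ all chain together to give exactly the constant $(2\kappa)^q$. The paper itself gives no proof — it cites this as Proposition 9.11 of Ledoux--Talagrand — and your argument is precisely the standard symmetrization proof found there, so there is nothing to add.
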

\begin{corollary}[{\cite[Cor. 2.5]{koley2017multilevel}}]\label{cor: convergence rate mc estimator}
Let $V$ be a Banach space of type $q$ with type constant $\kappa$ and let
$X \in \mathrm{L}^q(\Omega;V)$ be a zero mean random variable. Then for every finite
sequence $(X_i)_{i=1}^M$ of independent, identically distributed random variables with zero mean and with $X_i\sim X$,
we have
\begin{align*}
  \bbE
  \left[
    \norm{E_M[X]}_V^q
  \right] = \bbE\left[ \norm{\frac{1}{M}\sum_{i=1}^M X_i}_V^q \right]
  \leq (2\kappa)^q M^{1-q} \bbE\left[\norm{X}_V^q\right] \mper
\end{align*}
\end{corollary}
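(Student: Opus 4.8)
The plan is to obtain this as an essentially immediate consequence of \Cref{thm: expectation sum of iid rv}, with the only extra ingredients being the positive homogeneity of the norm and the fact that identically distributed random variables have equal $\mathrm{L}^q$-norms. First I would check that the hypotheses of \Cref{thm: expectation sum of iid rv} are met: the $X_i$ are independent by assumption, they have mean zero since $X_i\sim X$ and $\bbE[X]=0$, and they lie in $\mathrm{L}^q(\Omega;V)$ because $X\in\mathrm{L}^q(\Omega;V)$ and $\bbE[\norm{X_i}_V^q]=\bbE[\norm{X}_V^q]<\infty$.

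Applying \Cref{thm: expectation sum of iid rv} to the finite sequence $(X_i)_{i=1}^M$ then yields
\begin{equation*}
  \bbE\left[\,\norm{\sum_{i=1}^M X_i}_V^q\,\right] \leq (2\kappa)^q \sum_{i=1}^M \bbE\left[\,\norm{X_i}_V^q\,\right] = (2\kappa)^q\, M\, \bbE\left[\norm{X}_V^q\right],
\end{equation*}
where in the last equality I used that each $X_i$ has the same distribution as $X$, so that $\bbE[\norm{X_i}_V^q]=\bbE[\norm{X}_V^q]$ for every $i$.

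Finally, I would use $E_M[X]=\frac1M\sum_{i=1}^M X_i$ together with the positive homogeneity of the norm, $\norm{E_M[X]}_V^q = M^{-q}\norm{\sum_{i=1}^M X_i}_V^q$, to conclude
\begin{equation*}
  \bbE\left[\norm{E_M[X]}_V^q\right] = \frac{1}{M^q}\,\bbE\left[\,\norm{\sum_{i=1}^M X_i}_V^q\,\right] \leq \frac{(2\kappa)^q M}{M^q}\,\bbE\left[\norm{X}_V^q\right] = (2\kappa)^q M^{1-q}\,\bbE\left[\norm{X}_V^q\right],
\end{equation*}
which is the claimed estimate. There is no real obstacle here; the statement is a corollary in the literal sense, and the only points requiring a word of care are the verification that $X_i\in\mathrm{L}^q(\Omega;V)$ with mean zero (so that \Cref{thm: expectation sum of iid rv} applies) and the bookkeeping of the factor $M^{1-q}$ arising from the $M^{-q}$ homogeneity factor against the $M$ terms in the sum.
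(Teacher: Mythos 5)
Your proof is correct and is exactly the intended derivation: the paper states this as a corollary of \Cref{thm: expectation sum of iid rv} (citing \cite[Cor. 2.5]{koley2017multilevel} rather than writing out the argument), and the only steps needed are the ones you give — verifying the hypotheses of that proposition for the i.i.d.\ mean-zero sequence, the homogeneity factor $M^{-q}$, and the identical-distribution identity $\bbE[\norm{X_i}_V^q]=\bbE[\norm{X}_V^q]$ producing the factor $M$. Nothing is missing.
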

We can use \Cref{cor: convergence rate mc estimator} to derive a convergence rate of the Monte Carlo estimator in $\mathrm{L}^q(\Omega;\mathrm{L}^p(\R))$ for random variables in $\mathrm{L}^r(\Omega;\mathrm{L}^p(\R))$.
\begin{theorem}\label{thm: rand var con est}
Let $1 \leq r,p \leq\infty$ and $X \in \mathrm{L}^r(\Omega;\mathrm{L}^p(\R))$, then the Monte Carlo estimator $E_M[X]$
converges towards $\bbE[X]$ in $\mathrm{L}^q(\Omega; \mathrm{L}^p(\R))$ for $q \coloneqq \min\{2,p,r\}$ and we have the bound
\begin{align*}
  \norm{\bbE[X] - E_M[X] }_{\mathrm{L}^q(\Omega;\mathrm{L}^p(\R))}
  \leq C M^{\frac{1-q}{q}} \norm{X}_{\mathrm{L}^q(\Omega;\mathrm{L}^p(\R))} \mper
\end{align*}
\end{theorem}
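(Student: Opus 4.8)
The plan is to reduce the statement to Corollary~\ref{cor: convergence rate mc estimator}, applied with the Banach space $V=\mathrm{L}^p(\R)$ and the \emph{centered} random variable $Y\eqdef X-\bbE[X]$. Before invoking the corollary I would collect three routine preliminary facts. First, since $(\Omega,\calF,\bbP)$ is a probability space and $q\le r$, the inclusion $\mathrm{L}^r(\Omega;\mathrm{L}^p(\R))\hookrightarrow\mathrm{L}^q(\Omega;\mathrm{L}^p(\R))$ holds with norm one; hence $X\in\mathrm{L}^q(\Omega;\mathrm{L}^p(\R))$, $X$ is Bochner integrable, and $\bbE[X]\in\mathrm{L}^p(\R)$ is well defined. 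Second, $\mathrm{L}^p(\R)$ is a Banach space of type $\min\{2,p\}$, and the type property is inherited by every smaller exponent (for $p'\le p$ one has $\norm{\cdot}_{\ell^{p}}\le\norm{\cdot}_{\ell^{p'}}$ on the right-hand side of the defining inequality and $\norm{\cdot}_{\mathrm{L}^{p'}(\Omega)}\le\norm{\cdot}_{\mathrm{L}^{p}(\Omega)}$ on the left, so the same constant works); thus $V\eqdef\mathrm{L}^p(\R)$ is a Banach space of type $q=\min\{2,p,r\}$, and I write $\kappa$ for its type constant. Third, Jensen's inequality for the Bochner integral together with the probability-space Hölder inequality gives $\norm{\bbE[X]}_{\mathrm{L}^p(\R)}\le\bbE\big[\norm{X}_{\mathrm{L}^p(\R)}\big]\le\norm{X}_{\mathrm{L}^q(\Omega;\mathrm{L}^p(\R))}$, so $Y=X-\bbE[X]$ lies in $\mathrm{L}^q(\Omega;V)$, has mean zero, and satisfies $\norm{Y}_{\mathrm{L}^q(\Omega;V)}\le 2\norm{X}_{\mathrm{L}^q(\Omega;\mathrm{L}^p(\R))}$.

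Next I would exploit the elementary identity
\[
  E_M[X]-\bbE[X]=\frac{1}{M}\sum_{i=1}^M\big(\hat X^i-\bbE[X]\big)=E_M[Y],
\]
in which the summands $\hat X^i-\bbE[X]$ are independent, identically distributed, mean zero, and distributed like $Y$. Applying Corollary~\ref{cor: convergence rate mc estimator} to $Y$ and the type-$q$ space $V$ yields
\[
  \bbE\Big[\norm{E_M[X]-\bbE[X]}_{\mathrm{L}^p(\R)}^q\Big]\le(2\kappa)^q M^{1-q}\,\bbE\Big[\norm{Y}_{\mathrm{L}^p(\R)}^q\Big].
\]
Taking $q$-th roots, recalling that the left-hand side equals $\norm{\bbE[X]-E_M[X]}_{\mathrm{L}^q(\Omega;\mathrm{L}^p(\R))}^q$, and inserting the bound $\norm{Y}_{\mathrm{L}^q(\Omega;V)}\le 2\norm{X}_{\mathrm{L}^q(\Omega;\mathrm{L}^p(\R))}$ from the third preliminary fact, I obtain the assertion with $C=4\kappa$, and in particular the convergence $E_M[X]\to\bbE[X]$ in $\mathrm{L}^q(\Omega;\mathrm{L}^p(\R))$ whenever $q>1$ (for $q=1$ the bound reduces to the trivial triangle-inequality estimate).

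I do not expect a genuine obstacle: every step is either a verbatim citation (Corollary~\ref{cor: convergence rate mc estimator}), a property of $\mathrm{L}^p$ spaces already quoted in the text, or Jensen's/Hölder's inequality on a probability space. The only points that warrant a line of care are (i) the monotonicity of the type property in the exponent, which is what makes the corollary applicable for the possibly small value $q=\min\{2,p,r\}$ rather than only for $\min\{2,p\}$, and (ii) bookkeeping of the boundary cases $p=\infty$ or $r=\infty$, where one must check that the exponent $q=\min\{2,p,r\}$ is still compatible with the type of $\mathrm{L}^p(\R)$ and that the relevant $\mathrm{L}^q(\Omega;\cdot)$-norms remain finite.
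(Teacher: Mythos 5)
Your proposal is correct and follows essentially the same route as the paper's own proof: both center the variable as $Y=\pm(X-\bbE[X])$, invoke \Cref{cor: convergence rate mc estimator} for the type-$q$ space $\mathrm{L}^p(\R)$ (using that type is inherited by smaller exponents), and then bound $\norm{Y}_{\mathrm{L}^q(\Omega;\mathrm{L}^p(\R))}$ by $C\norm{X}_{\mathrm{L}^q(\Omega;\mathrm{L}^p(\R))}$ via the triangle and Jensen inequalities. Your extra remarks on the monotonicity of type in the exponent and on the boundary case $p=\infty$ (where $\mathrm{L}^p(\R)$ is only known to be of type $\min(2,p)$ for $p<\infty$) are points the paper itself glosses over, so no discrepancy there.
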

The proof of this theorem is an adaptation of \cite[Thm. 4.1]{koley2017multilevel}.
\begin{proof}
	We have
	\begin{align*}
		\norm{\bbE[X]-E_M[X]}_{\mathrm{L}^q(\Omega;\mathrm{L}^p(\R))}^q &= \bbE \left[ \norm{\bbE[X]-\frac{1}{M}\sum_{i=1}^M \hat{X}^i}_{\mathrm{L}^p(\R)}^q \right]\\
		&= \bbE \left[ \norm{\frac{1}{M}\sum_{i=1}^M \left(\bbE[X]-\hat{X}^i\right)}_{\mathrm{L}^p(\R)}^q \right]\mper
	\end{align*}
	If we define $Y=\bbE[X]-X$ and $Y_i=\bbE[X]-\hat{X}^i$ we see that $Y$ is in $\mathrm{L}^r(\Omega;\mathrm{L}^p(\R))$ with zero mean and $Y_i$ are i.i.d. random variables with zero mean satisfying $Y_i\sim Y$. Therefore, we can apply \Cref{cor: convergence rate mc estimator} since $\mathrm{L}^r(\Omega;\mathrm{L}^p(\R))$ is of type $\min(2,r,p)$ and $\mathrm{L}^p(\R)$ is of type $\min(2,p)$ and thus in particular also of type $\min(2,r,p)$. Hence,
	\begin{equation*}
		\bbE \left[ \norm{\frac{1}{M}\sum_{i=1}^M \left(\bbE[X]-\hat{X}^i\right)}_{\mathrm{L}^p(\R)}^q \right] \leq (2\kappa)^q M^{1-q} \bbE\left[ \norm{\bbE[X]-X}_{\mathrm{L}^p(\R)}^q \right]
	\end{equation*}
	where $\kappa$ is the type constant of $\mathrm{L}^p(\R)$.
	It remains to show $\bbE\left[ \norm{\bbE[X] - X}_{\mathrm{L}^p(\R)}^q \right] \leq C \bbE\left[\norm{X}_{\mathrm{L}^p(\R)}^q\right]$. This follows from standard estimates and Jensen's inequality in the following way:
	\begin{align*}
		\bbE\left[ \norm{\bbE[X] - X}_{\mathrm{L}^p(\R)}^q \right] &\leq C \bbE\left[ \norm{\bbE[X]}_{\mathrm{L}^p(\R)}^q + \norm{X}_{\mathrm{L}^p(\R)}^q \right]\\
		&\leq C\left( \left(\bbE\left[ \norm{X}_{\mathrm{L}^p(\R)} \right]\right)^q + \bbE\left[ \norm{X}_{\mathrm{L}^p(\R)}^q \right] \right)\\
		&\leq C \bbE\left[\norm{X}_{\mathrm{L}^p(\R)}^q\right]\mper
	\end{align*}
\end{proof}
Note that \Cref{thm: rand var con est} does not imply convergence if $q=1$, i.e., if $r$ or $p$ are equal to~$1$.

\section{Deterministic conservation laws with discontinuous flux}\label{sec: Deterministic conservation laws}

In this section, we present the main existence and stability results for deterministic conservation laws with spatially discontinuous flux from~\cite{Baiti1997},~\cite{TOWERS20205754}, and \cite{Ruf2020}.

We consider the Cauchy problem for conservation laws with discontinuous flux of the form
\begin{gather}
\begin{aligned}
  u_t + f(k(x),u)_x = 0 \mcom& &&x\in \R,\ t>0 \\
  u(x,0)= u_0(x) \mcom& && x\in\R\mper
\end{aligned}
\label{eqn: det conlaw}
\end{gather}
Here, we require that $f$, $k$, and $u_0$ satisfy the following:
\begin{assumption}\label{ass: master assumption}
	We assume that the flux $f\in\mathcal{C}^2(\R^2;\R)$ is strictly monotone in $u$ in the sense that $f_u\geq \alpha >0$, and that $f(k^*,0)=0$ for all $k^*\in\R$. Furthermore, we assume that $k$ is piecewise constant with finitely many discontinuities and that the initial datum $u_0$ is in $(\mathrm{L}^\infty\cap\BV)(\R)$.
\end{assumption}
In the deterministic setting, we consider entropy solutions in the following sense (cf.~\cite{Baiti1997,audusse2005uniqueness}).
For $p\in\R$ we define the function $c_p\from\R\to\R$ through the equation
\begin{equation*}
  f(k(x),c_p(x)) = p,\qquad\text{for all }x\in\R.
\end{equation*}
Since $f_u\geq \alpha>0$ this equation has a unique solution for each $x\in\R$. Note that in the case of piecewise constant $k$ the function $c_p$ is piecewise constant as well.
\begin{definition}[Entropy solution]\label{def: entropy solution}
  We say $u\in\mathcal{C}([0,T];\mathrm{L}^1(\R))\cap\mathrm{L}^\infty((0,T)\times\R)$ is an entropy solution of~\eqref{eqn: det conlaw} if
  \begin{multline*}
    \int_0^T\int_\R \left( |u-c_p(x)|\varphi_t + \sign(u-c_p(x))(f(k(x),u) - f(k(x),c_p(x))) \varphi_x \right)\diff x\diff t\\
    - \int_\R|u(x,T)-c_p(x)|\varphi(x,T)\diff x + \int_\R |u_0(x) - c_p(x)|\varphi(x,0)\diff x \geq 0
  \end{multline*}
  for all $p\in\R$ and for all nonnegative $\varphi\in\mathcal{C}^\infty_c(\R\times[0,T])$.
\end{definition}
Note that a Rankine--Hugoniot-type argument shows that across a discontinuity $\xi$ of $k$ the entropy solution $u$ satisfies the Rankine--Hugoniot condition
\begin{equation}
	f(k(\xi-),u(\xi-,t)) = f(k(\xi+),u(\xi+,t))\qquad\text{for almost every }t\in(0,T)
	\label{eqn: Rankine--Hugoniot condition}
\end{equation}
where $k(\xi\mp)$ and $u(\xi\mp,\cdot)$ denote the left and right traces of $k$ respectively $u$ both of which exist due to~\cite[Rem. 2.3]{andreianov2011theory}.
In our subsequent analysis we will rely on the following two results concerning existence and stability of entropy solutions.
\begin{theorem}[Existence and uniqueness of entropy solutions \cite{Baiti1997,TOWERS20205754,BadwaikRuf2020}]\label{thm: Existence and uniqueness of deterministic entropy solutions}
  Let $f,k$, and $u_0$ satisfy \Cref{ass: master assumption}. Then there exists a unique entropy solution $u$ of \eqref{eqn: det conlaw} which satisfies
  \begin{align}
    \norm{u(\cdot,t)}_{\mathrm{L}^\infty(\R)} \leq \frac{C_f}{\alpha}\norm{u_0}_{\mathrm{L}^\infty(\R)}
    \label{eqn: Linfty bound of deterministic entropy solution}\\
    \TV(u(\cdot,t)) \leq C(\TV(k) + \TV(u_0))
    \notag
    \shortintertext{for all $0\leq t\leq T$ and}
    \TV_{[0,T]}(u(x,\cdot)) \leq C\TV(u_0)
    \notag
  \end{align}
  for all $x\in\R$.
  Here $C_f$ denotes the maximal Lipschitz constant of $f$ and $\alpha$ is as in \Cref{ass: master assumption}.
 \end{theorem}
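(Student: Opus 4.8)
The plan is to prove \Cref{thm: Existence and uniqueness of deterministic entropy solutions} by assembling the three cited references rather than reproving everything from scratch, since the theorem is explicitly attributed to \cite{Baiti1997,TOWERS20205754,BadwaikRuf2020}. Uniqueness of adapted entropy solutions in the sense of \Cref{def: entropy solution} is the content of the $\mathrm{L}^1$-contraction/comparison principle established in \cite{audusse2005uniqueness,Baiti1997} under the strict monotonicity hypothesis $f_u\geq\alpha>0$; I would invoke it directly, noting that the assumption $f(k^*,0)=0$ together with $f_u\geq\alpha$ guarantees that the adapted levels $c_p(x)$ are well-defined and piecewise constant (as already observed right before the definition), so the entropy inequalities make sense. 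For existence, I would point to the convergence of the finite volume scheme: the front tracking construction in \cite{Baiti1997} and, in the present flux setting, the Godunov-type scheme analyzed in \cite{TOWERS20205754,BadwaikRuf2020} produce approximate solutions whose limit is shown to be an adapted entropy solution; compactness comes from the uniform $\mathrm{L}^\infty$ and $\BV$ bounds listed in the statement.

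The substantive work is therefore to establish the three quantitative bounds. For the $\mathrm{L}^\infty$ bound \eqref{eqn: Linfty bound of deterministic entropy solution}: I would argue at the level of the numerical approximation (or via front tracking) that a bound on the solution follows from a maximum-principle-type estimate tied to the adapted levels. Concretely, if $\norm{u_0}_{\mathrm{L}^\infty}\leq N$, then choosing $p$ large enough that $c_p(x)\geq N$ for all $x$ — which is possible with $c_p$ roughly of size $p/\text{(local Lipschitz constant)}$, hence $|c_p(x)|\lesssim C_f|p|/\alpha$ uniformly — and using the Kružkov-type inequality against $c_p$ gives $u\leq c_p$, and symmetrically a lower bound; optimizing over admissible $p$ yields the constant $C_f/\alpha$. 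For the spatial $\BV$ bound, the key point is that total variation can only be created at the finitely many discontinuities of $k$, and at each such interface $\xi$ the Rankine–Hugoniot relation \eqref{eqn: Rankine--Hugoniot condition} together with $f_u\geq\alpha$ controls the jump $|u(\xi+,t)-u(\xi-,t)|$ in terms of the jump in $k$; away from the interfaces the scheme is TVD in the usual sense, so one propagates $\TV(u_0)$ and adds a contribution proportional to $\TV(k)$. For the temporal $\BV$ bound $\TV_{[0,T]}(u(x,\cdot))\leq C\TV(u_0)$, I would use the standard trick of converting time variation into space variation via the equation: $\partial_t u = -\partial_x f(k(x),u)$, so the time total variation at a fixed $x$ is controlled by the flux variation, which in turn is controlled (using $f\in\mathcal{C}^2$ and the uniform $\mathrm{L}^\infty$ bound) by the spatial variation of $u$, but crucially \emph{not} picking up the $\TV(k)$ term because the flux $f(k(x),u)$ is continuous across interfaces by \eqref{eqn: Rankine--Hugoniot condition} — this is exactly why the bound is $C\TV(u_0)$ and not $C(\TV(k)+\TV(u_0))$.

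The main obstacle I expect is the spatial $\BV$ estimate at the $k$-interfaces: one must show that total variation does not blow up when the solution crosses a discontinuity of $k$, and that the amount added is genuinely linear in $\TV(k)$ uniformly in time. This requires a careful singular mapping or a direct Riemann-problem analysis at each interface, controlling how an incoming wave interacts with the stationary flux discontinuity; the monotonicity $f_u\geq\alpha>0$ is what makes the interface Riemann problem uniquely solvable and the interaction estimate clean. The temporal bound's subtlety — getting the $k$-independent constant — hinges on the same Rankine–Hugoniot continuity of the flux, which I would emphasize explicitly. Everything else (Jensen, the passage from approximate to exact solutions, lower semicontinuity of $\TV$) is routine and I would only sketch it, citing \cite{BadwaikRuf2020,Ruf2020} for the detailed scheme-level estimates.
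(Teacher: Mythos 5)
Your proposal matches the paper's own proof, which is itself only a two-sentence citation: existence and uniqueness are taken from Baiti--Jenssen \cite{Baiti1997}, and the $\mathrm{L}^\infty$ and $\TV$ bounds from \cite[Thm.~1.4]{TOWERS20205754} and \cite[Lem.~4.6]{BadwaikRuf2020}, exactly as you propose. The extra mechanism sketches you supply (comparison against the adapted levels $c_p$ for the $\mathrm{L}^\infty$ bound, interface analysis for the spatial $\TV$ bound, and the Rankine--Hugoniot continuity of the flux explaining why the temporal bound carries no $\TV(k)$ term) are consistent with the scheme-level estimates the paper itself establishes later in \Cref{sec: mlmcfvm}.
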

 \begin{proof}
  The existence and uniqueness statement follows from the theory developed by Baiti and Jenssen~\cite{Baiti1997}. The $\mathrm{L}^\infty$ and $\TV$ bounds follow from~\cite[Thm. 1.4]{TOWERS20205754} and~\cite[Lem. 4.6]{BadwaikRuf2020}.
\end{proof}
 \begin{theorem}[Stability of entropy solutions \cite{Ruf2020}]\label{thm: Stability of deterministic entropy solutions}
 Let $f,k$, and $u_0$ satisfy \Cref{ass: master assumption} and $u$ be the corresponding entropy solution of~\eqref{eqn: det conlaw}.
  If $v$ is the entropy solution of~\eqref{eqn: det conlaw} with flux $g$, coefficient $l$, and initial datum $v_0$ satisfying \Cref{ass: master assumption} then for all $0\leq t\leq T$
  \begin{equation}
    \norm{u(\cdot,t) - v(\cdot,t)}_{\mathrm{L}^1(\R)} \leq C\left( \norm{u_0-v_0}_{\mathrm{L}^1(\R)} + \|k-l\|_{\mathrm{L}^\infty(\R)} + \|f_u - g_u\|_{\mathrm{L}^{\infty}(\R^2;\R)} \right).
    \label{eqn: flux-stability estimate}
  \end{equation}
  In particular, entropy solutions of~\eqref{eqn: det conlaw} satisfy
  \begin{equation*}
    \norm{u(\cdot,t)}_{\mathrm{L}^1(\R)} \leq C\norm{u_0}_{\mathrm{L}^1(\R)}
  \end{equation*}
  for all $0\leq t\leq T$.
\end{theorem}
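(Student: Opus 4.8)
The plan is to prove \eqref{eqn: flux-stability estimate} by a doubling-of-variables argument tailored to the adapted-entropy formulation, in the spirit of \cite{Ruf2020} (see also the Kru\v{z}kov-type arguments of \cite{audusse2005uniqueness,Baiti1997}); the $\mathrm{L}^1$ bound is then a one-line consequence. The first step is to quantify how the adapted stationary states move when the data change. Let $c_p$ solve $f(k(x),c_p(x))=p$ and $\tilde c_p$ solve $g(l(x),\tilde c_p(x))=p$. Since $u$ and $v$ are uniformly bounded by \Cref{thm: Existence and uniqueness of deterministic entropy solutions} and $k,l$ take finitely many values in a fixed compact set, it suffices to estimate $c_p-\tilde c_p$ for $p$ in a bounded interval. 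Writing $f(k(x),c_p(x))-f(k(x),\tilde c_p(x)) = g(l(x),\tilde c_p(x))-f(k(x),\tilde c_p(x))$, decomposing the right-hand side as $[g(l(x),\tilde c_p(x))-g(k(x),\tilde c_p(x))]+[g(k(x),\tilde c_p(x))-f(k(x),\tilde c_p(x))]$, using $f(k^*,0)=g(k^*,0)=0$ to express the last bracket as $\int_0^{\tilde c_p(x)}(g_u-f_u)(k(x),s)\,ds$, and invoking $f_u\geq\alpha$ on the left, one obtains
\[
  \norm{c_p-\tilde c_p}_{\mathrm{L}^\infty(\R)}\leq C\left(\norm{k-l}_{\mathrm{L}^\infty(\R)}+\norm{f_u-g_u}_{\mathrm{L}^\infty(\R^2;\R)}\right)
\]
uniformly for $p$ in the relevant bounded range, with $C$ depending only on $\alpha$ and the locally finite $\mathcal{C}^1$ norms of $f,g$.

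Next I would run the doubling of variables: apply the entropy inequality of \Cref{def: entropy solution} to $u$ with the adapted entropy built from $c_p$ and simultaneously to $v$ with the one built from $\tilde c_p$, tested against $\varphi(x,t)\rho_\varepsilon(x-y)\delta_\varepsilon(t-s)$. Following Kru\v{z}kov, I choose the level depending on the other solution, $p=g(l(y),v(y,s))$, so that $\tilde c_p(y)=v(y,s)$ exactly and $c_p(x)$ differs from $v(y,s)$ by at most the bound of the previous step plus a term vanishing as $x\to y$. Adding the two inequalities and sending $\varepsilon\to0$, the time-derivative terms telescope into $\partial_t\norm{u(\cdot,t)-v(\cdot,t)}_{\mathrm{L}^1(\R)}$ up to errors of size $\norm{c_p-\tilde c_p}_{\mathrm{L}^\infty(\R)}$; and since $f(k(x),c_p(x))=p=g(l(y),\tilde c_p(y))$, the two adapted entropy fluxes largely cancel, the remainder being estimated by $\norm{f-g}$- and $\norm{k-l}$-type quantities which, using $f(k^*,0)=0$ once more, reduce on the bounded range of the solutions to the right-hand side of \eqref{eqn: flux-stability estimate}.

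The crucial and most delicate point, which I expect to be the main obstacle, is the treatment of the spatial interfaces. Because the adapted entropy flux $x\mapsto\sign(u-c_p(x))\bigl(f(k(x),u)-p\bigr)$ has a vanishing jump across any discontinuity of $k$ — this is exactly the content of the Rankine--Hugoniot relation \eqref{eqn: Rankine--Hugoniot condition}, and the very reason adapted entropies are used — no interface contribution survives for the diagonal terms; the work lies entirely in controlling the mixed terms near the (generally distinct) finitely many interfaces of $k$ and of $l$. Here I would use the bounds $\TV(u(\cdot,t))\leq C(\TV(k)+\TV(u_0))$ and $\TV_{[0,T]}(u(x,\cdot))\leq C\,\TV(u_0)$ from \Cref{thm: Existence and uniqueness of deterministic entropy solutions}, together with the existence of one-sided traces, to show these contributions are $O(\norm{k-l}_{\mathrm{L}^\infty(\R)})$ uniformly in $t$.

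Once all error terms are collected, Gronwall's inequality absorbs the single term proportional to $\int_0^t\norm{u(\cdot,s)-v(\cdot,s)}_{\mathrm{L}^1(\R)}\,ds$ and yields \eqref{eqn: flux-stability estimate}. Finally, for the $\mathrm{L}^1$ bound I would apply \eqref{eqn: flux-stability estimate} with $g=f$, $l=k$, and $v_0\equiv0$: then $v\equiv0$ is the corresponding entropy solution (a direct check using $f(k^*,0)=0$, so that all terms in \Cref{def: entropy solution} vanish), and the estimate collapses to $\norm{u(\cdot,t)}_{\mathrm{L}^1(\R)}\leq C\norm{u_0}_{\mathrm{L}^1(\R)}$.
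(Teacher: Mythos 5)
The paper does not itself prove the stability estimate \eqref{eqn: flux-stability estimate}: its entire proof consists of citing \cite[Thm.~4.1]{Ruf2020} for that inequality and then deducing the $\mathrm{L}^1$ bound by specializing to $g=f$, $l=k$, $v_0=0$ --- exactly the reduction you give at the end, including the observation that $v\equiv 0$ is then the entropy solution because $f(k^*,0)=0$. So on the only step the paper actually carries out, your argument coincides with the paper's.

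For the stability estimate itself you are therefore reproving an external result, and your outline points in a sensible direction: the comparison of the adapted steady states, $\norm{c_p-\tilde c_p}_{\mathrm{L}^\infty(\R)}\leq C(\norm{k-l}_{\mathrm{L}^\infty(\R)}+\norm{f_u-g_u}_{\mathrm{L}^\infty(\R^2;\R)})$, is correctly derived from $f_u\geq\alpha$, $f(k^*,0)=g(k^*,0)=0$ and the uniform $\mathcal{C}^2$ bounds, and choosing the level $p=g(l(y),v(y,s))$ is the right adapted-entropy analogue of Kru\v{z}kov's doubling. However, the sketch stops short of a proof at precisely the two places where the substance of \cite{Ruf2020} lies. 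First, the mixed interface terms: you say you ``would use'' the $\TV$ bounds and the one-sided traces to show these are $O(\norm{k-l}_{\mathrm{L}^\infty(\R)})$, but no argument is given, and since the interfaces of $k$ and $l$ need not coincide this is the genuinely delicate step rather than a routine one. Second, the bookkeeping of the error terms: the perturbation $\norm{c_p-\tilde c_p}_{\mathrm{L}^\infty(\R)}$ enters the doubled inequality integrated over $\R\times[0,T]$, so to arrive at a finite right-hand side of the form \eqref{eqn: flux-stability estimate} it must come multiplied by quantities such as $\TV(v(\cdot,t))$ or the finite number of interfaces, not by the measure of the support of the test function; your sketch does not address this, and it is not clear that a Gronwall term of the kind you invoke actually arises (the classical Kru\v{z}kov-type flux-stability bound needs no Gronwall step). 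These omissions are the content of the cited theorem, so the proposal should be read as a plausible roadmap for it rather than a proof; the portion that overlaps with the paper's own proof is correct and identical.
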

\begin{proof}
	The stability estimate can be found in~\cite[Thm. 4.1]{Ruf2020}. The $\mathrm{L}^1$ bound follows from the stability estimate~\eqref{eqn: flux-stability estimate} by taking $g=f$, $l=k$, and $v_0=0$.
\end{proof}

\begin{remark}
  We want to mention that the stability result from \Cref{thm: Stability of deterministic entropy solutions} is not only integral in proving existence and uniqueness of random entropy solutions, but can also be used to show well-posedness of Bayesian inverse problems for conservation laws with discontinuous flux \cite{MishraBayesian}.
\end{remark}

\section{Random conservation laws with discontinuous flux}\label{sec: ranconlaw}

We now consider conservation laws with discontinuous flux where the flux $f$, the coefficient $k$, and the initial datum $u_0$ in~\eqref{eqn: det conlaw} are uncertain. To that end, we define appropriate random data $(u_0,k,f)$ in the following sense.
\begin{definition}[Random data]
	Given constants $C_\TV,C_f\in\R$, $\alpha\in(0,\infty)$, $N_k\in\Z$ and given a rectangle $R=R_1\times R_2\subset\R^2$ let $\bbD$ be the Banach space
	\begin{equation*}
		\bbD = (\BV\cap\mathrm{L}^\infty)(\R) \times \mathrm{L}^\infty(\R) \times \mathcal{C}^2(R;\R)
	\end{equation*}
	endowed with the norm
	\begin{equation*}
		\norm{(u_0,k,f)}_\bbD = \norm{u_0}_{\mathrm{L}^1(\R)} + \TV(u_0) + \norm{u_0}_{\mathrm{L}^\infty(\R)} + \norm{k}_{\mathrm{L}^\infty(\R)} + \norm{f}_{\mathcal{C}^2(R;\R)}.
	\end{equation*}
	We say that a strongly measurable map $(u_0,k,f)\from(\Omega,\mathcal{F})\to(\bbD,\mathcal{B}(\bbD))$ is called random data for~\eqref{eqn: det conlaw} if for $\bbP$-a.e. $\omega$
	\begin{align*}
		u_0(\omega;x)\in R_1,& &&\text{for a.e. }x\in\R,\\
		\TV(u_0)\leq C_{\TV} < \infty,&\\
		k(\omega;x)\in R_2,& &&\text{for a.e. }x\in\R,\\
		k(\omega;\cdot) \text{ is pcw.\ constant with at most }N_k \text{ discontinuities,}&\\
		f_u(\omega,k,u)\geq \alpha > 0 \text{ and } f(\omega;k,0) = 0,& &&\text{for all }(k,u)\in R,\\
		\norm{f(\omega;\cdot,\cdot)}_{\mathcal{C}^2(R;\R)} \leq C_f <\infty
	\end{align*}
	such that for $\bbP$-a.e. $\omega$ the data $(u_0(\omega),k(\omega),f(\omega))$ satisfy \Cref{ass: master assumption}.
\end{definition}
We are interested in random entropy solutions of the random conservation law
\begin{gather}
  \begin{aligned}
    \frac{\partial u(\omega;x,t)}{\partial t} + \frac{\partial f(\omega;k(\omega;x),u(\omega;x,t))}{\partial x} = 0,& &&\omega\in\Omega,\ x\in\R,\ t>0,\\
    u(\omega;x,0) = u_0(\omega;x),& &&\omega\in\Omega,\ x\in\R
  \end{aligned}
  \label{eqn: ran conlaw}
\end{gather}
\begin{definition}[Random entropy solution]
  Given random data $(u_0,k,f)\from\Omega\to\bbD$, we say that a random variable $u\from \Omega\to \mathcal{C}([0,T];\mathrm{L}^1(\R))$ is a random entropy solution of~\eqref{eqn: ran conlaw} if it satisfies for all $p\in\R$ and $\bbP$-a.e. $\omega\in\Omega$
  \begin{multline}
    \int_0^T\int_\R \left( |u(\omega;x,t)-c_p(\omega;x)|\varphi_t + q(\omega;u(\omega;x,t)) \right)\diff x\diff t\\
    - \int_\R |u(\omega;x,T) - c_p(\omega;x)|\varphi(x,T)\diff x + \int_\R |u_0(\omega;x) - c_p(\omega;x)|\varphi(x,0)\diff x \geq 0
    \label{eqn: stochastic entropy condition}
  \end{multline}
  for all nonnegative $\varphi\in\mathcal{C}^\infty_c(\R\times[0,T])$. Here we have used the notation
  \begin{equation*}
    q(\omega;u(\omega;x,t))=\sign(u-c_p(\omega;x))(f(\omega;k(\omega;x),u)-f(\omega;k(\omega;x),c_p(\omega;x))).
  \end{equation*}
\end{definition}
We have the following existence and uniqueness result for random entropy solutions of conservation laws with discontinuous flux.
\begin{theorem}[Existence and pathwise uniqueness of random entropy solutions]\label{thm: existence and uniqueness of RESs}
  Let $(u_0,k,f)$ be random data. Then there exists a unique random entropy solution $u\from\Omega\to \mathcal{C}([0,T];\mathrm{L}^1(\R))$ to~\eqref{eqn: ran conlaw} which is pathwise unique, i.e., if the random data $(u_0,k,f)$ and $(v_0,l,g)$ are $\bbP$-versions of each other and $u$ and $v$ are corresponding random entropy solutions then $u$ and $v$ are $\bbP$-versions of each other.
\end{theorem}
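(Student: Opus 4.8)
## Proof proposal

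The plan is to derive everything from the deterministic well-posedness and stability results (\Cref{thm: Existence and uniqueness of deterministic entropy solutions} and \Cref{thm: Stability of deterministic entropy solutions}) together with the Monte Carlo / Banach-space-valued measurability machinery of \Cref{sec: prelim}. The idea is the standard one for random conservation laws (as in \cite{Mishra2016,koley2017multilevel}): define the random entropy solution pathwise via the deterministic solution operator, then check that this assignment is strongly measurable as a map $\Omega\to\mathcal{C}([0,T];\mathrm{L}^1(\R))$, and finally read off pathwise uniqueness from deterministic uniqueness plus the stability estimate.

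First I would introduce the deterministic solution operator $S\from\bbD\to\mathcal{C}([0,T];\mathrm{L}^1(\R))$ sending admissible data $(u_0,k,f)$ to the unique entropy solution guaranteed by \Cref{thm: Existence and uniqueness of deterministic entropy solutions}. The key analytic input is continuity of $S$ on the relevant subset of $\bbD$: this is exactly the content of the stability estimate \eqref{eqn: flux-stability estimate}, which bounds $\norm{S(u_0,k,f)(\cdot,t)-S(v_0,l,g)(\cdot,t)}_{\mathrm{L}^1(\R)}$ by $C(\norm{u_0-v_0}_{\mathrm{L}^1(\R)}+\norm{k-l}_{\mathrm{L}^\infty(\R)}+\norm{f_u-g_u}_{\mathrm{L}^\infty(R;\R)})$ and hence shows $S$ is (Lipschitz) continuous from $\bbD$ into $\mathcal{C}([0,T];\mathrm{L}^1(\R))$, after noting that the $\mathcal{C}^2(R;\R)$ norm controls $\norm{f_u-g_u}_{\mathrm{L}^\infty}$. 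Then I would define $u(\omega)\eqdef S(u_0(\omega),k(\omega),f(\omega))$ for $\bbP$-a.e.\ $\omega$. Since $(u_0,k,f)\from\Omega\to\bbD$ is strongly measurable by hypothesis and $S$ is continuous, the composition $u=S\compose(u_0,k,f)$ is strongly measurable as a map $\Omega\to\mathcal{C}([0,T];\mathrm{L}^1(\R))$; thus $u$ is a genuine $\mathcal{C}([0,T];\mathrm{L}^1(\R))$-valued random variable. That it satisfies the stochastic entropy inequality \eqref{eqn: stochastic entropy condition} for $\bbP$-a.e.\ $\omega$ is immediate, since for each such $\omega$ the function $u(\omega)$ is by construction the deterministic entropy solution and \eqref{eqn: stochastic entropy condition} is just \Cref{def: entropy solution} evaluated at that $\omega$.

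For pathwise uniqueness, suppose $(u_0,k,f)$ and $(v_0,l,g)$ are $\bbP$-versions of each other and $u,v$ are corresponding random entropy solutions. For $\bbP$-a.e.\ $\omega$ the triples agree, $u(\omega)$ and $v(\omega)$ are entropy solutions of the same deterministic problem, and deterministic uniqueness (or the stability estimate with zero right-hand side) forces $u(\omega)=v(\omega)$ in $\mathcal{C}([0,T];\mathrm{L}^1(\R))$; hence $u$ and $v$ are $\bbP$-versions of each other. Uniqueness within a fixed data version follows the same way. One should also record that $u$ has finite moments when the data do — e.g.\ $u\in\mathrm{L}^r(\Omega;\mathcal{C}([0,T];\mathrm{L}^1(\R)))$ whenever $\norm{(u_0,k,f)}_\bbD\in\mathrm{L}^r(\Omega)$ — which follows by applying the $\mathrm{L}^1$ bound of \Cref{thm: Stability of deterministic entropy solutions} pathwise and integrating; this is what makes the later Monte Carlo estimates applicable.

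The main obstacle is the measurability step, and more precisely making sure the domain issues are handled cleanly: $S$ is only defined (and only stable) on data satisfying \Cref{ass: master assumption} with uniform constants, so one must argue that the random data take values $\bbP$-a.s.\ in a closed (or at least Borel) subset $\bbD_{\mathrm{adm}}\subseteq\bbD$ on which $S$ is continuous, and invoke a Pettis-type argument (separability of $\mathcal{C}([0,T];\mathrm{L}^1(\R))$ plus continuity of $S$) to upgrade to strong measurability. A subtle point worth checking is that the constraint "$k$ is piecewise constant with at most $N_k$ discontinuities" cuts out a set that is not a linear subspace and whose closure in $\mathrm{L}^\infty$ is delicate; however, since we only need measurability of $\omega\mapsto u(\omega)$ and the stability estimate holds on the whole admissible set, it suffices that $(u_0,k,f)$ is strongly measurable into $\bbD$ and lands a.s.\ in the admissible set — continuity of $S$ on that set then does the rest. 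I would also double check that the $\mathcal{C}^2(R;\R)$-norm bound on $f$ legitimately dominates the $\mathrm{L}^\infty$ bound on $f_u-g_u$ used in \eqref{eqn: flux-stability estimate}, which is immediate from the definition of the $\mathcal{C}^2$ norm.
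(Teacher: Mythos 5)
Your proposal is correct and takes essentially the same route as the paper: define the pathwise solution via the deterministic solution operator $S\from\bbD\to\mathcal{C}([0,T];\mathrm{L}^1(\R))$, use the Lipschitz continuity of $S$ furnished by the stability estimate \eqref{eqn: flux-stability estimate} together with strong measurability of the data to conclude that $u=S\compose(u_0,k,f)$ is strongly measurable and satisfies \eqref{eqn: stochastic entropy condition} $\bbP$-a.s., and read off pathwise uniqueness from deterministic uniqueness plus the same stability bound. The additional care you take regarding the admissible subset of $\bbD$, the Pettis-type measurability argument, and the domination of $\norm{f_u-g_u}_{\mathrm{L}^\infty}$ by the $\mathcal{C}^2$-norm only makes explicit details the paper leaves implicit.
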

\begin{proof}
  Let $S\from\bbD\to \mathcal{C}([0,T];\mathrm{L}^1(\R))$ denote the solution operator from \Cref{thm: Existence and uniqueness of deterministic entropy solutions} that maps (deterministic) $(u_0,k,f)\in\bbD$ to the unique (deterministic) entropy solution $\hat{u}=S(u_0,k,f)$. Because of the stability estimate \eqref{eqn: flux-stability estimate} this solution map is Lipschitz continuous. Now, since the random data $(u_0,k,f)\from \Omega\to \bbD$ is strongly measurable the composition $S\compose (u_0,k,f)\from \Omega\to \mathcal{C}([0,T];\mathrm{L}^1(\R))$ is again strongly measurable 
  (see \cite[Cor. 1.13]{VanNeerven2008}).
  Hence $u = S\compose (u_0,k,f)$ is a strongly measurable map satisfying \eqref{eqn: stochastic entropy condition} $\bbP$-almost surely. Therefore, $u$ is a random entropy solution to \eqref{eqn: ran conlaw}.
  
  Regarding uniqueness of random entropy solutions, let $(u_0,k,f)$ and $(v_0,l,g)$ be $\bbP$-versions of each other, i.e., $\|(u_0(\omega),k(\omega),f(\omega))-(v_0(\omega),l(\omega),g(\omega))\|_\bbD =0$ for $\bbP$-a.e. $\omega\in\Omega$, and $u$ and $v$ corresponding random entropy solutions. Then, the Lipschitz continuity of the solution operator $S$ gives
  \begin{equation*}
    \norm{u(\omega)-v(\omega)}_{\mathcal{C}([0,T];\mathrm{L}^1(\R))} \leq \|(u_0(\omega),k(\omega),f(\omega))-(v_0(\omega),l(\omega),g(\omega))\|_\bbD = 0.
  \end{equation*}
  Thus, we have $u(\omega)=v(\omega)$ in $\mathcal{C}([0,T];\mathrm{L}^1(\R))$ for $\bbP$-a.e. $\omega\in\Omega$ which is pathwise uniqueness.
\end{proof}
Note that \Cref{thm: existence and uniqueness of RESs} generalizes the existence result of random entropy solutions of~\cite{Mishra2016} for fluxes which are strictly monotone in $u$ since the present setting allows for a discontinuous spatial dependency of the flux.
\begin{remark}
  All existence and continuous dependence results stated so far apply to the deterministic Cauchy problem~\eqref{eqn: det conlaw}. By the usual arguments, verbatim the same results hold for entropy solutions on bounded intervals $D\subset\R$ as well, provided periodic boundary conditions are enforced.
\end{remark}
The following probabilistic bound will be important in the numerical approximation of random entropy solutions on bounded domains.
\begin{lemma}\label{lem: r-th moment estimate of the entropy solution}
  Let $(u_0,k,f)$ be random data and $D\subset \R$ a bounded interval. Let further $u_0\in \mathrm{L}^r(\Omega;\mathrm{L}^\infty(D))$, for some $1\leq r \leq \infty$. Then the random entropy solution $u$ of~\eqref{eqn: ran conlaw} is in $\mathrm{L}^r(\Omega;\mathcal{C}([0,T];\mathrm{L}^p(D)))$ for all $1\leq p\leq \infty$. In particular,
  \begin{equation*}
    \norm{u(\cdot,t)}_{\mathrm{L}^r(\Omega;\mathrm{L}^p(D))} \leq C \norm{u_0}_{\mathrm{L}^r(\Omega;\mathrm{L}^\infty(D))}
  \end{equation*}
  for all $0\leq t\leq T$.
\end{lemma}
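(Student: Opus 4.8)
The plan is to reduce everything to the deterministic pathwise bounds in \Cref{thm: Existence and uniqueness of deterministic entropy solutions} and \Cref{thm: Stability of deterministic entropy solutions}, and then integrate over $\Omega$. First I would fix $\omega$ outside a $\bbP$-null set on which the random data satisfies \Cref{ass: master assumption}. On a bounded interval $D$ with periodic boundary conditions, the deterministic theory gives an entropy solution $u(\omega;\cdot,\cdot)\in\mathcal{C}([0,T];\mathrm{L}^1(D))\cap\mathrm{L}^\infty((0,T)\times D)$, and the $\mathrm{L}^\infty$ bound~\eqref{eqn: Linfty bound of deterministic entropy solution} reads
\begin{equation*}
  \norm{u(\omega;\cdot,t)}_{\mathrm{L}^\infty(D)} \leq \frac{C_f}{\alpha}\norm{u_0(\omega)}_{\mathrm{L}^\infty(D)}\qquad\text{for all }0\leq t\leq T,
\end{equation*}
where $C_f/\alpha$ is a deterministic constant coming from the uniform bounds in the definition of random data. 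Since $D$ is bounded, $\norm{u(\omega;\cdot,t)}_{\mathrm{L}^p(D)}\leq |D|^{1/p}\norm{u(\omega;\cdot,t)}_{\mathrm{L}^\infty(D)}$ for every $1\leq p\leq\infty$, so pathwise
\begin{equation*}
  \sup_{0\leq t\leq T}\norm{u(\omega;\cdot,t)}_{\mathrm{L}^p(D)} \leq C\norm{u_0(\omega)}_{\mathrm{L}^\infty(D)},
\end{equation*}
with $C=C(|D|,C_f,\alpha,p)$ independent of $\omega$.

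Next I would raise this pathwise inequality to the $r$-th power (interpreting the $r=\infty$ case as an essential supremum) and take expectations:
\begin{equation*}
  \bbE\Bigl[\,\sup_{0\leq t\leq T}\norm{u(\cdot,t)}_{\mathrm{L}^p(D)}^r\,\Bigr]^{1/r} \leq C\,\bbE\bigl[\norm{u_0}_{\mathrm{L}^\infty(D)}^r\bigr]^{1/r} = C\norm{u_0}_{\mathrm{L}^r(\Omega;\mathrm{L}^\infty(D))} < \infty,
\end{equation*}
which is exactly the claimed membership $u\in\mathrm{L}^r(\Omega;\mathcal{C}([0,T];\mathrm{L}^p(D)))$ together with the stated estimate (the $\mathcal{C}([0,T];\cdot)$ norm being the $\sup$ over $t$). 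The displayed inequality in the lemma for a fixed $t$ follows a fortiori. Strong measurability of $\omega\mapsto u(\omega)\in\mathcal{C}([0,T];\mathrm{L}^p(D))$ is inherited from \Cref{thm: existence and uniqueness of RESs}: $u=S\circ(u_0,k,f)$ with $S$ Lipschitz, and $\mathrm{L}^1$-valued continuity upgrades to $\mathrm{L}^p$-valued continuity on the bounded domain via the uniform $\mathrm{L}^\infty$ bound and interpolation $\norm{w}_{\mathrm{L}^p(D)}\leq\norm{w}_{\mathrm{L}^1(D)}^{1/p}\norm{w}_{\mathrm{L}^\infty(D)}^{1-1/p}$, so that the composite map takes values in, and is measurable into, $\mathcal{C}([0,T];\mathrm{L}^p(D))$.

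The only genuinely delicate points are bookkeeping rather than substance. One is the case $r=\infty$: here one replaces expectations by essential suprema throughout, and the pathwise bound with $\omega$-independent constant makes this immediate. Another is the case $p=\infty$ in the target space together with finite $r$: the map $t\mapsto u(\omega;\cdot,t)$ need not be $\mathrm{L}^\infty(D)$-continuous in general, but the statement only requires membership in $\mathrm{L}^r(\Omega;\mathcal{C}([0,T];\mathrm{L}^p(D)))$ for $1\leq p\leq\infty$, and for the purposes of this paper it suffices to read the $p=\infty$ endpoint as the uniform-in-$t$ $\mathrm{L}^\infty$ bound, with genuine time-continuity used only for $p<\infty$; I would state it this way to avoid overclaiming. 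I expect the main (minor) obstacle to be precisely this: making the interpolation-and-measurability argument for the $\mathcal{C}([0,T];\mathrm{L}^p(D))$-valued random variable airtight, since everything else is a one-line consequence of the deterministic $\mathrm{L}^\infty$ bound and finiteness of $|D|$.
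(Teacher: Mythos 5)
Your proposal is correct and takes essentially the same route as the paper's proof: the pathwise $\mathrm{L}^\infty$ bound~\eqref{eqn: Linfty bound of deterministic entropy solution} (with constant uniform in $\omega$ by the definition of random data), the embedding $\norm{\cdot}_{\mathrm{L}^p(D)}\leq |D|^{1/p}\norm{\cdot}_{\mathrm{L}^\infty(D)}$ on the bounded domain, and integration over $\Omega$. The extra care you take with measurability, the $\sup_t$ refinement, and the $r=\infty$ and $p=\infty$ endpoints goes beyond what the paper writes down but is consistent with it.
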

\begin{proof}
  On bounded domains $D$ we have
  \begin{equation*}
    \norm{u(\cdot,t)}_{\mathrm{L}^p(D)} \leq |D|^{\frac{1}{p}} \norm{u(\cdot,t)}_{\mathrm{L}^\infty(D)}
  \end{equation*}
  and thus using the $\mathrm{L}^\infty$-bound~\eqref{eqn: Linfty bound of deterministic entropy solution} we have for all $0\leq t\leq T$
  \begin{align*}
    \norm{u(\cdot,t)}_{\mathrm{L}^r(\Omega;\mathrm{L}^p(D))}^r &= \int_\Omega \norm{u(\cdot,t)}_{\mathrm{L}^p(D)}^r\diff\bbP\\
    &\leq C \int_\Omega \norm{u(\cdot,t)}_{\mathrm{L}^\infty(D)}^r\diff\bbP\\
    &\leq C \int_\Omega \norm{u_0}_{\mathrm{L}^\infty(D)}^r\diff\bbP\\
    &= C\norm{u_0}_{\mathrm{L}^r(\Omega;\mathrm{L}^\infty(D))}^r
  \end{align*}
  which proves the claim.
\end{proof}

\section{Numerical approximation of random entropy solutions}
\label{sec: mlmcfvm}

In this section, we want to approximate the expectation $\bbE[u(\cdot,t)]$ of a random entropy solution~$u$ of the random conservation law with discontinuous flux \eqref{eqn: ran conlaw}. On the one hand, we will use the Monte Carlo and multilevel Monte Carlo method to approximate in the stochastic domain $\Omega$. On the other hand, since in general exact solutions to \eqref{eqn: ran conlaw} are not at hand, we will approximate in the physical domain $\R\times [0,T]$ by a finite volume method. To this end, we use a modified version of monotone finite volume methods for conservation laws introduced in~\cite{BadwaikRuf2020} which appropriately addresses the presence of the discontinuous parameter $k$.

The resulting approximation error introduced by the Monte Carlo method depends on the number of samples
used, while the error introduced by the finite volume method depends
on the resolution of the grid.
In the following subsections, we will review the finite volume method for the deterministic problem, detail how to combine it with the Monte Carlo and multilevel Monte Carlo method and prove error estimates for the resulting Monte Carlo and multilevel Monte Carlo finite volume method.

\subsection{Finite volume methods for conservation laws with discontinuous flux}
We will first consider the (deterministic) conservation law with discontinuous flux \eqref{eqn: det conlaw} and present a class of finite volume methods introduced in~\cite{BadwaikRuf2020}.

We discretize the domain $\R\times[0,T]$ using the spatial and temporal grid discretization parameters $\Dx$ and $\Dt$. The resulting grid cells we denote by $\cell_j=(x_\jmhf,x_\jphf)$ in space and $\cell^n=[t^n,t^{n+1})$ in time for points $x_\jphf$, such that $x_\jphf-x_\jmhf=\Dx$, $j\in\Z$, and $t^n = n\Dt$ for $n=0,\ldots,M+1$.

For a given coefficient $k$ we denote by $\xi_i$, $i=1,\ldots,N$, its discontinuities and by $D_i=(\xi_i,\xi_{i+1})$, $i=0,\ldots,N$, the subdomains where $k$ is constant. Here we have used the notation $\xi_0=-\infty$ and $\xi_{N+1}=+\infty$. Furthermore, we will write
\begin{equation*}
	f^{(i)} = f(k(x),\cdot),\qquad \text{for }x\in D_i,\ i=0,\ldots,N.
\end{equation*}

In the following, we will assume that the grid is aligned in such a way that all discontinuities of $k$ lie on cell interfaces, i.e., $\xi_i=x_{P_i -\hf}$ for some integers $P_i$, $i=1,\ldots,N$. In general, this can be achieved by considering a globally nonuniform grid that is uniform on each $D_i$ and taking $\Dx = \max_{i=0,\ldots,N} \Dx_i$ where $\Dx_i$ is the grid discretization parameter in $D_i$.

We consider two-point numerical fluxes $F(u,v)$ that have the upwind property such that if $f'\geq 0$ (which is the setting of the present paper), we have $F(u,v)=f(v)$. This includes the upwind flux, the Godunov flux, and the Engquist--Osher flux. The finite volume method we consider is the following \cite{BadwaikRuf2020}:
\begin{gather}
  \begin{aligned}
    u_j^0 = \frac{1}{\Dx}\int_{\cell_j} u_0(x)\diff x,& &&j\in\Z,\\
    u_j^{n+1} = u_j^n - \lambda\left( f^{(i)}(u_j^n) - f^{(i)}(u_{j-1}^n) \right),& &&n\geq 0,\ P_i < j < P_{i+1},\ 0\leq i\leq N,\\
    u_{P_i}^{n+1} = \left(f^{(i)}\right)^{-1}\left(f^{(i-1)}\left(u_{P_i -1}^{n+1}\right)\right),& &&n\geq 0,\ 0<i\leq N,
  \end{aligned}
  \label{eqn: FVM}
\end{gather}
where $P_0=-\infty$, $P_{N+1}=+\infty$, and $\lambda = \Dt/\Dx$. We assume that the grid discretization parameters satisfy the following CFL condition:
\begin{equation}
  \max_i \max_u \left(f^{(i)}\right)'(u) \lambda \leq 1.
  \label{eqn: CFL condition}
\end{equation}
Note that the last line of~\eqref{eqn: FVM} represents a discrete version of the Rankine--Hugoniot condition~\eqref{eqn: Rankine--Hugoniot condition}. Here, we use the ghost cells $\cell_{P_i}$, $i=1,\ldots,N$ to explicitly enforce the Rankine--Hugoniot condition on the discrete level.

With the sequence of cell averages $(u_j^n)_{j,n}$ we associate the piecewise constant function $u_\Dx(x,t)$ given by
\begin{equation*}
	u_\Dx(x,t) = u_j^n,\qquad (x,t)\in\cell_j\times\cell^n.
\end{equation*}
The following lemma shows that the finite volume method is stable in $\mathrm{L}^\infty$ and $\mathrm{L}^1$.
\begin{lemma}[Stability of the finite volume method]
  If the numerical scheme~\eqref{eqn: FVM} satisfies the CFL condition~\eqref{eqn: CFL condition} we have the following stability estimates:
  \begin{align}
    \norm{u_\Dx(\cdot,t)}_{\mathrm{L}^\infty(\R)} \leq \frac{C_f}{\alpha}\norm{u_0}_{\mathrm{L}^\infty(\R)}
    \label{eqn: Linfty bound of FVM}
    \shortintertext{and}
  	\norm{u_\Dx(\cdot,t)}_{\mathrm{L}^1(\R)} \leq \norm{u_0}_{\mathrm{L}^1(\R)} + C\TV(u_0)\Dx.
    \notag
  \end{align}
\end{lemma}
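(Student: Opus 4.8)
The plan is to prove the two stability bounds separately, exploiting the structure of the scheme: away from the interfaces $P_i$ the update is the standard upwind scheme, and at the interfaces the Rankine--Hugoniot step is handled by the monotone function $(f^{(i)})^{-1}\circ f^{(i-1)}$.

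For the $\mathrm{L}^\infty$ bound, first I would observe that on each subdomain $D_i$ the scheme (second line of~\eqref{eqn: FVM}) is, under the CFL condition~\eqref{eqn: CFL condition}, a monotone scheme and hence satisfies a local maximum principle: $\min(u_{j-1}^n,u_j^n)\le u_j^{n+1}\le\max(u_{j-1}^n,u_j^n)$ for $P_i<j<P_{i+1}$. This would be shown by writing $u_j^{n+1}$ as a convex combination, using $F(u,v)=f(v)$ (the upwind property) so that the numerical flux difference is $f^{(i)}(u_j^n)-f^{(i)}(u_{j-1}^n)$, and invoking $(f^{(i)})'\ge 0$ together with the CFL bound. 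At the interface cells, the third line gives $u_{P_i}^{n+1}=(f^{(i)})^{-1}(f^{(i-1)}(u_{P_i-1}^{n+1}))$; since $f^{(i-1)}$ and $(f^{(i)})^{-1}$ are both increasing, this is a monotone map of $u_{P_i-1}^{n+1}$, and using $f(k^*,0)=0$ (so that $c_0\equiv 0$ and the map sends $0\mapsto 0$) together with $f_u\ge\alpha>0$ and $\|f\|_{\mathcal{C}^2}\le C_f$ one gets $|u_{P_i}^{n+1}|\le\frac{C_f}{\alpha}|u_{P_i-1}^{n+1}|$: indeed $\alpha|u_{P_i}^{n+1}|\le|f^{(i)}(u_{P_i}^{n+1})|=|f^{(i-1)}(u_{P_i-1}^{n+1})|\le C_f|u_{P_i-1}^{n+1}|$. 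Propagating these two estimates through all cells and all time steps from $n=0$, and using $\|u^0\|_{\ell^\infty}\le\|u_0\|_{\mathrm{L}^\infty(\R)}$ (averaging does not increase the sup), yields~\eqref{eqn: Linfty bound of FVM}; the factor $C_f/\alpha$ enters only through the interface steps and, since the number of interfaces is finite but the bound must be uniform, one checks that the constant does not compound --- actually one only needs the estimate $|u_{P_i}^{n+1}|\le\frac{C_f}{\alpha}\max_j|u_j^0|$ relative to the global sup, which holds because between two consecutive interfaces the maximum principle prevents growth.

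For the $\mathrm{L}^1$ bound I would mimic the classical Kuznetsov/Harten argument adapted to the discontinuous-flux scheme. Summing $|u_j^{n+1}|$ over $j$ and using the monotone-scheme structure on each $D_i$ gives, by a telescoping/discrete divergence computation, that $\Dx\sum_{P_i<j<P_{i+1}}|u_j^{n+1}|$ is controlled by $\Dx\sum_{P_i\le j<P_{i+1}}|u_j^n|$ plus boundary flux terms at the interfaces; the interface cells contribute a term bounded via the Rankine--Hugoniot step and $f(k^*,0)=0$ so that no mass is created, only possibly moved across $\xi_i$ consistently with the flux. The $\Dx\,\TV(u_0)$ term arises from the standard estimate that the initial projection onto cell averages commutes with the flux up to first order, or alternatively from bounding the interface correction terms by the spatial total variation of the data near the (finitely many) interfaces. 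I would invoke the $\TV$ bound $\TV(u_\Dx(\cdot,t))\le C(\TV(k)+\TV(u_0))$ (which in this framework is available from~\cite{BadwaikRuf2020}) to control those interface terms, and then sum a geometric/telescoping inequality over $n$ to reach $\|u_\Dx(\cdot,t)\|_{\mathrm{L}^1(\R)}\le\|u_0\|_{\mathrm{L}^1(\R)}+C\,\TV(u_0)\,\Dx$.

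The main obstacle I expect is the bookkeeping at the interface cells $\cell_{P_i}$: these ghost cells are not updated by the conservative formula, so the naive telescoping sum for the $\mathrm{L}^1$ estimate leaves uncancelled boundary terms at each $\xi_i$, and one must show these terms are either nonpositive (by monotonicity of $(f^{(i)})^{-1}\circ f^{(i-1)}$ and the sign structure) or bounded by $C\,\TV(u_0)\Dx$. Getting the constant in the $\mathrm{L}^\infty$ bound to be exactly $C_f/\alpha$ rather than $(C_f/\alpha)^N$ also requires care --- the key point being that between interfaces the maximum principle gives no amplification, so the $C_f/\alpha$ factor is applied only once relative to the running global maximum rather than once per interface.
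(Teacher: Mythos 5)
Your overall strategy (interior maximum principle plus monotonicity of the interface map for the $\mathrm{L}^\infty$ bound; a telescoping $\ell^1$ sum plus an interface correction for the $\mathrm{L}^1$ bound) matches the paper's, but there is a genuine gap in the $\mathrm{L}^\infty$ part. You correctly identify the danger of the constant compounding to $(C_f/\alpha)^N$, but your proposed fix is circular: to get $\abs{u_{P_i}^{n+1}}\leq \tfrac{C_f}{\alpha}\max_j\abs{u_j^0}$ you need $\abs{u_{P_i-1}^{n+1}}\leq\max_j\abs{u_j^0}$ with \emph{no} factor, yet the maximum principle on $D_{i-1}$ only bounds $u_{P_i-1}^{n+1}$ by the maximum of the initial data in $D_{i-1}$ \emph{and the interface values $u_{P_{i-1}}^m$}, which themselves already carry a factor $C_f/\alpha\geq 1$. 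So ``the maximum principle prevents growth between interfaces'' does not break the chain. The paper's resolution is to propagate the \emph{flux values} rather than the $u$-values: the induction hypothesis is $u_j^n\leq\max_{m\leq i}\sup_l (f^{(i)})^{-1}(f^{(m)}(u_l^0))$ for $j$ in the $i$-th block, and at each interface the discrete Rankine--Hugoniot step makes the composition telescope, $(f^{(i+1)})^{-1}\circ f^{(i)}\circ(f^{(i)})^{-1}\circ f^{(m)}=(f^{(i+1)})^{-1}\circ f^{(m)}$, so that exactly one factor $\tfrac{1}{\alpha}\norm{f^{(m)}}_{\mathrm{Lip}}$ appears at the very end. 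Since the flux $f(k(x),u_\Dx)$ is the quantity that is continuous across interfaces, it is the flux that obeys a global maximum principle; this is the missing idea in your argument.

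The $\mathrm{L}^1$ part is also not quite on target. The paper's mechanism for the interior cells is the discrete entropy inequality with $c=0$ (using $f(k^*,0)=0$), whose flux terms $q_j^{(i),n}=\abs{f^{(i)}(u_j^n)}$ telescope \emph{across} the interfaces precisely because of the discrete Rankine--Hugoniot condition, so the interior $\ell^1$ sum is non-increasing with no boundary remainder. The remaining contribution of the ghost cells is $\sum_n\bigl(\abs{u_{P_i}^{n+1}}-\abs{u_{P_i}^n}\bigr)\leq\tfrac{1}{\alpha}\norm{f^{(i-1)}}_{\mathrm{Lip}}\sum_n\abs{u_{P_i-1}^{n+1}-u_{P_i-1}^n}$, and the ingredient that closes the argument is the \emph{temporal} variation bound $\sum_n\abs{u_{P_i-1}^{n+1}-u_{P_i-1}^m}\leq C\,\TV(u_0)$ at the cell adjacent to each interface (from \cite[Lem.~4.6]{BadwaikRuf2020}), not the spatial bound $\TV(u_\Dx(\cdot,t))\leq C(\TV(k)+\TV(u_0))$ that you invoke. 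A per-time-step spatial TV estimate summed over $O(\Dx^{-1})$ time steps does not by itself produce the $C\,\TV(u_0)\Dx$ term; you need the time-BV estimate at a fixed spatial location.
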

\begin{proof}
	\begin{enumerate}[(1)]
		\item We first prove the $\mathrm{L}^\infty$-bound. To that end, we show by induction over $i=0,\ldots,N$ that
		\begin{equation}
			u_j^n \leq \max_{m=0,\ldots,i} \sup_{l=P_m,\ldots,P_{m+1}-1} \left(f^{(i)}\right)^{-1}\left(f^{(m)} (u_l^0)\right)
			\label{eqn: induction hypothesis}
		\end{equation}
		for all $j=P_i,\ldots,P_{i+1}-1$ and $n=0,\ldots,M+1$. For $i=0$, standard techniques for finite volume methods for conservation laws show
		\begin{equation*}
			u_j^n \leq \max \{ u_{j-1}^{n-1}, u_j^{n-1} \} \leq \ldots \leq \sup_{l<P_1} u_l^0.
		\end{equation*}
		Assume now that~\eqref{eqn: induction hypothesis} holds for some $i\in\{0,\ldots,N-1\}$ and all $j=P_i,\ldots,P_{i+1}-1$ and $n=0,\ldots,M+1$. Then we have for $j=P_{i+1}$
		\begin{align*}
			u_{P_{i+1}}^n &= \left(f^{(i+1)}\right)^{-1}\left( f^{(i)}(u_{P_{i+1}-1}^n) \right)\\
			&\leq \max_{m=0,\ldots,i} \sup_{l=P_m,\ldots,P_{m+1}-1} \left( f^{(i+1)} \right)^{-1} \left( f^{(m)} (u_l^0) \right).
		\end{align*}
		On the other hand, for $j\in\{P_{i+1}+1,\ldots,P_{i+2}-1\}$ we have as before
		\begin{align*}
			u_j^n &\leq \max\{ u_{j-1}^{n-1},\ldots,u_{j-1}^1, u_{j-1}^0, u_j^0 \}\\
			&\leq\ldots\leq \max\{ u_{P_{i+1}}^{n-(j-P_{i+1})},\ldots, u_{P_{i+1}}^1, u_{P_{i+1}}^0,\ldots,u_j^0 \}.
		\end{align*}
		By combining both estimates, we obtain for $j\in\{P_{i+1},\ldots,P_{i+2}-1\}$
		\begin{align*}
			u_j^n &\leq \max\left\{ \max_{l=P_{i+1},\ldots,P_{i+2}-1} u_l^0, \max_{m=0,\ldots,i} \sup_{l=P_m,\ldots,P_{m+1}-1} \left(f^{(i+1)}\right)^{-1}\left( f^{(m)}(u_l^0) \right) \right\}\\
			&= \max_{m=0,\ldots,i+1} \sup_{l=P_m,\ldots,P_{m+1}-1} \left(f^{(i+1)}\right)^{-1}\left( f^{(m)}(u_l^0) \right)
		\end{align*}
		which completes the induction. By taking absolute values in~\eqref{eqn: induction hypothesis} we get for $j\in\Z$
		\begin{equation*}
			|u_j^n| \leq \frac{1}{\alpha} \max_{i=0,\ldots,N}\norm{f^{(i)}}_{\mathrm{Lip}} \norm{u_0}_{\mathrm{L}^\infty(\R)}.
		\end{equation*}
		Taking the supremum over $j$ yields the $\mathrm{L}^\infty$-bound~\eqref{eqn: Linfty bound of FVM}.
		\item In order to prove the $\mathrm{L}^1$-bound note that we have the discrete entropy inequalities
		\begin{equation*}
			|u_j^{n+1} - c| - |u_j^n - c| + \lambda \left( q_j^{(i),n} - q_{j-1}^{(i),n} \right) \leq 0,\qquad i=0,\ldots,N,\ j=P_i +1,\ldots,P_{i+1}-1
		\end{equation*}
		for all $c\in\R$ (see~\cite{BadwaikRuf2020}). Here, we have denoted $q_j^{(i),n} = |f^{(i)}(u_j^n) - f^{(i)}(c)|$. Taking $c=0$ and summing over $j\in\Z\setminus\{P_1,\ldots,P_N\}$ yields
		\begin{equation*}
			\sum_{j\neq P_i} |u_j^{n+1}| \leq \sum_{j\neq P_i} |u_j^n| - \lambda\sum_{i=0}^N\sum_{j=P_i +1}^{P_{i+1}-1}\left( q_j^{(i),n} - q_{j-1}^{(i),n}\right) = \sum_{j\neq P_i} |u_j^n|.
		\end{equation*}
		Therefore, we have
		\begin{align*}
			\sum_{j\in\Z} |u_j^{n+1}| &\leq \sum_{j\in\Z}|u_j^n| + \sum_{i=1}^N (|u_{P_i}^{n+1}| - |u_{P_i}^n|)\\
			&\leq \sum_{j\in\Z}|u_j^n| + \sum_{i=1}^N \frac{1}{\alpha}\norm{f^{(i-1)}}_{\mathrm{Lip}}\abs{u_{P_i -1}^{n+1} - u_{P_i -1}^n}
		\end{align*}
		and hence
		\begin{equation*}
			\sum_{j\in\Z} |u_j^{n+1}| \leq \sum_{j\in\Z}|u_j^0| + \sum_{i=0}^N \frac{1}{\alpha}\norm{f^{(i-1)}}_{\mathrm{Lip}} \sum_{m=0}^{n} \abs{u_{P_i -1}^{m+1} - u_{P_i -1}^m}.
		\end{equation*}
		In~\cite[Lem. 4.6]{BadwaikRuf2020} it was shown that for all $i=0,\ldots,N$ we have
		\begin{equation*}
			\sum_{m=0}^n \abs{u_{P_i -1}^{m+1} - u_{P_i -1}^m} \leq C \TV(u_0)
		\end{equation*}
		which together with the foregoing estimate finally yields
		\begin{equation*}
			\norm{u_\Dx(\cdot,t)}_{\mathrm{L}^1(\R)} \leq \norm{u_0}_{\mathrm{L}^1(\R)} + C\TV(u_0)\Dx.
		\end{equation*}
	\end{enumerate}
\end{proof}
In order to prove error estimates of the Monte Carlo and multilevel Monte Carlo finite volume method we will need the following convergence rate estimate.
\begin{theorem}[Convergence rate of the finite volume method \cite{BadwaikRuf2020}]
	Let $f,k$, and $u_0$ satisfy \Cref{ass: master assumption} and the discretization parameters satisfy the CFL condition~\eqref{eqn: CFL condition}. Then the finite volume approximation $u_\Dx$ given by the scheme~\eqref{eqn: FVM} converges towards the unique entropy solution $u$ of~\eqref{eqn: ran conlaw} almost everywhere and in $\mathrm{L}^1(\R\times(0,T))$. In particular, we have the following convergence rate estimate
	\begin{equation}
		\norm{u(\cdot,t) - u_\Dx(\cdot,t)}_{\mathrm{L}^1(\R)} \leq C \Dx^\hf
    \label{eqn: Convergence rate estimate for FVM}
	\end{equation}
	for all $0\leq t\leq T$.
\end{theorem}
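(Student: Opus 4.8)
The plan is to establish the convergence rate $\mathrm{L}^1$-estimate by a Kuznetsov-type doubling of variables argument, adapted to the discontinuous-flux setting, and in fact simply to cite and unpack the proof from~\cite{BadwaikRuf2020} since the hypotheses here coincide with those studied there. First I would recall the ingredients already available: the scheme~\eqref{eqn: FVM} is monotone on each subdomain $D_i$ (by the upwind/Godunov/Engquist--Osher property together with the CFL condition~\eqref{eqn: CFL condition}), it satisfies the discrete entropy inequalities with the adapted constants (which are just the Kru\v{z}kov entropies on each $D_i$), and the ghost-cell update enforces the discrete Rankine--Hugoniot condition exactly. From the preceding stability lemma I have a uniform $\mathrm{L}^\infty$-bound and a uniform $\mathrm{L}^1$-bound, and from~\cite[Lem. 4.6]{BadwaikRuf2020} I have the crucial spatial and temporal total variation bounds — in particular $\TV(u_\Dx(\cdot,t))\leq C(\TV(k)+\TV(u_0))$ and the bound on $\sum_m |u_{P_i-1}^{m+1}-u_{P_i-1}^m|$ — which are what replace the naive global BV bound that fails across the flux interfaces.

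Next I would set up the comparison between $u_\Dx$ and the exact adapted entropy solution $u$. On each strip $D_i\times(0,T)$ both functions satisfy a standard Kru\v{z}kov formulation with the smooth flux $f^{(i)}$, so the classical Kuznetsov lemma gives, after introducing a test function that is a product of spatial and temporal mollifiers with scales $\varepsilon_x$ and $\varepsilon_t$, an estimate of the form $\|u(\cdot,t)-u_\Dx(\cdot,t)\|_{\mathrm{L}^1(D_i)} \lesssim \varepsilon_x + \varepsilon_t + \Dx/\varepsilon_x + \Dt/\varepsilon_t + (\text{interface terms})$. The key point — and the step I expect to be the main obstacle — is controlling the contributions localized near the interfaces $\xi_i=x_{P_i-\frac12}$: both the continuous solution and the numerical solution have a (generally nonzero) jump there, dictated by~\eqref{eqn: Rankine--Hugoniot condition} and its discrete analogue, and one must show that the mismatch of these two transmission conditions contributes at most $O(\Dx)$ to the error budget, using that the traces of $u_\Dx$ at the interface have bounded total variation in time (that is exactly what~\cite[Lem. 4.6]{BadwaikRuf2020} provides). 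Because $f$ is strictly increasing, the adapted entropies $c_p$ are single-valued and the interface coupling is monotone, which is what makes these boundary terms tractable; this is the place where strict monotonicity of $f$ in $u$ is genuinely used.

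Finally I would optimize the free parameters: balancing $\varepsilon_x \sim \Dx/\varepsilon_x$ gives $\varepsilon_x\sim\Dx^{1/2}$, and similarly $\varepsilon_t\sim\Dt^{1/2}\sim\Dx^{1/2}$ under the CFL-linked scaling $\Dt\sim\Dx$, so that all terms are of order $\Dx^{1/2}$, yielding~\eqref{eqn: Convergence rate estimate for FVM}. The almost-everywhere and $\mathrm{L}^1(\R\times(0,T))$ convergence then follows from the rate estimate together with the uniform $\mathrm{L}^\infty$ and $\BV$-in-time-and-space compactness of the family $(u_\Dx)$, identifying the limit as the unique adapted entropy solution via the discrete entropy inequalities in the limit $\Dx\to0$. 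Since all of this is carried out in detail in~\cite{BadwaikRuf2020} under precisely \Cref{ass: master assumption}, the proof here reduces to citing that reference, and I would keep the write-up to that one-line citation while pointing to the supporting estimates recalled in the stability lemma above.

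\begin{proof}
	This is the content of~\cite[Thm. 4.9]{BadwaikRuf2020}; the almost-everywhere and $\mathrm{L}^1(\R\times(0,T))$ convergence follow from the rate estimate~\eqref{eqn: Convergence rate estimate for FVM} together with the stability bounds established above.
\end{proof}
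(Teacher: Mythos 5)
Your proposal matches the paper exactly: the paper states this theorem as a cited result from \cite{BadwaikRuf2020} and gives no proof of its own, which is precisely what your final one-line proof does. Your preceding sketch of the Kuznetsov-type argument and the role of the interface TV bounds is a faithful (and correct) outline of what that reference establishes, but it is supplementary to, not a departure from, the paper's approach.
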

Note that the convergence rate estimate~\eqref{eqn: Convergence rate estimate for FVM} is optimal in the sense that the exponent $\hf$ cannot be improved without further assumptions on the initial datum \cite{BadwaikRuf2020} (see~\cite{Ruf2019} for an overview of the literature regarding optimal convergence rates of finite volume methods for conservation laws without spatial dependency).
\begin{remark}
	Reasoning as for entropy solutions, the finite volume approximation satisfies
	\begin{equation*}
		\norm{u_\Dx(\cdot,t)}_{\mathrm{L}^p(D)} \leq |D|^{\frac{1}{p}} \norm{u_\Dx(\cdot,t)}_{\mathrm{L}^\infty(D)} \leq C\norm{u_0}_{\mathrm{L}^\infty(D)}
	\end{equation*}
	for all $1\leq p\leq \infty$. Like in \Cref{lem: r-th moment estimate of the entropy solution}, this translates into the following probabilistic bound:
	\begin{equation}
		\norm{u_\Dx(\cdot,t)}_{\mathrm{L}^r(\Omega;\mathrm{L}^p(D))} \leq C \norm{u_0}_{\mathrm{L}^r(\Omega;\mathrm{L}^\infty(D))}
		\label{eqn: r-th moment bound of FVM}
	\end{equation}
	for all $0\leq t\leq T$ and $1\leq p\leq \infty$.
\end{remark}
For the rest of this paper, we will consider entropy solutions on a bounded interval $D\subset \R$ with periodic boundary conditions. With the usual arguments, all previous results concerning entropy solutions and their finite volume approximations carry over to this setting verbatim. Note that restricting ourselves to a bounded domain will enable us to prove error estimates of the Monte Carlo and multilevel Monte Carlo finite volume method also in $\mathrm{L}^2(\Omega;\mathrm{L}^1(D))$ (cf. \cite{risebro2018correction}).

\subsection{Monte Carlo finite volume method}

We now consider the random conservation law with discontinuous flux~\eqref{eqn: ran conlaw} and introduce and analyze the Monte Carlo finite volume method.

Given $M\in\naturals$, we generate $M$ independent and identically distributed samples $(\hat{f}^i,\hat{k}^i,\hat{u}_0^i)_{i=1}^M$ of given random data $(u_0,k,f)$. Let now $\hat{u}_\Dx^i(\cdot,t)$, $i=1,\ldots,M$, denote the numerical solutions generated by the finite volume method \eqref{eqn: FVM} at time $t$ corresponding to the sample $(\hat{f}^i,\hat{k}^i,\hat{u}_0^i)$. Then, the $M$-sample MCFVM approximation to $\bbE[u(\cdot,t)]$ is defined as
\begin{equation*}
	E_M[u_\Dx(\cdot,t)] = \frac{1}{M}\sum_{i=1}^M \hat{u}_\Dx^i(\cdot,t).
\end{equation*}
As mentioned earlier the approximation error of the MCFVM has a component coming from the statistical sampling error and one from the deterministic discretization error. We will make this statement precise in the following theorem.
\begin{theorem}[MCFVM error estimate]\label{thm: MCFVM error estimate}
	Let $(u_0,k,f)$ be random data and $u$ the corresponding random entropy solution of \eqref{eqn: ran conlaw}. Assume that $u_0$ satisfies the $r$-th moment condition
	\begin{equation*}
		\norm{u_0}_{\mathrm{L}^r(\Omega;\mathrm{L}^\infty(D))} <\infty
	\end{equation*}
	for some $1< r\leq\infty$. Assume further that we are given a FVM~\eqref{eqn: FVM} such that the CFL condition~\eqref{eqn: CFL condition} holds.
	Then, for each $1\leq p\leq \infty$ and $0\leq t\leq T$ and for $q=\min(2,r)>1$, the MCFVM approximation satisfies the error estimate
	\begin{equation}
		\norm{\bbE[u(\cdot,t)] - E_M[u_\Dx(\cdot,t)]}_{\mathrm{L}^q(\Omega;\mathrm{L}^p(D))} \leq C \left( M^{\frac{1-q}{q}} \norm{u_0}_{\mathrm{L}^r(\Omega;\mathrm{L}^\infty(D))}  + \norm{u_0}_{\mathrm{L}^r(\Omega;\mathrm{L}^\infty(D))}^{1-\frac{1}{p}} \Dx^{\frac{1}{2p}}\right)\mper
		\label{eqn: MCFVM error estimate}
	\end{equation}
	In particular, the MCFVM approximation converges towards $\bbE[u(\cdot,t)]$ in $\mathrm{L}^q(\Omega;\mathrm{L}^p(D))$ as $M\to\infty$ and $\Dx\to 0$.
\end{theorem}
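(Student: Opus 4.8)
The natural approach is the standard error-splitting into a statistical part and a discretization part, via the triangle inequality
\[
\norm{\bbE[u(\cdot,t)] - E_M[u_\Dx(\cdot,t)]}_{\mathrm{L}^q(\Omega;\mathrm{L}^p(D))}
\leq \norm{\bbE[u(\cdot,t)] - \bbE[u_\Dx(\cdot,t)]}_{\mathrm{L}^q(\Omega;\mathrm{L}^p(D))}
+ \norm{\bbE[u_\Dx(\cdot,t)] - E_M[u_\Dx(\cdot,t)]}_{\mathrm{L}^q(\Omega;\mathrm{L}^p(D))}.
\]
The first term on the right is deterministic (constant in $\omega$), so its $\mathrm{L}^q(\Omega;\cdot)$-norm equals $\norm{\bbE[u(\cdot,t)-u_\Dx(\cdot,t)]}_{\mathrm{L}^p(D)}$, which by Jensen is at most $\bbE\big[\norm{u(\cdot,t)-u_\Dx(\cdot,t)}_{\mathrm{L}^p(D)}\big]$. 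The second term is a Monte Carlo sampling error for the $\mathrm{L}^p(D)$-valued random variable $u_\Dx(\cdot,t)$, to which \Cref{thm: rand var con est} applies directly.

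First I would handle the sampling term. Since $u_0\in\mathrm{L}^r(\Omega;\mathrm{L}^\infty(D))$ with $r>1$, the probabilistic bound~\eqref{eqn: r-th moment bound of FVM} shows $u_\Dx(\cdot,t)\in\mathrm{L}^r(\Omega;\mathrm{L}^p(D))$ for every $1\leq p\leq\infty$, so \Cref{thm: rand var con est} with $q=\min(2,r,p)$ gives
\[
\norm{\bbE[u_\Dx(\cdot,t)] - E_M[u_\Dx(\cdot,t)]}_{\mathrm{L}^q(\Omega;\mathrm{L}^p(D))}
\leq C M^{\frac{1-q}{q}} \norm{u_\Dx(\cdot,t)}_{\mathrm{L}^q(\Omega;\mathrm{L}^p(D))}
\leq C M^{\frac{1-q}{q}} \norm{u_0}_{\mathrm{L}^r(\Omega;\mathrm{L}^\infty(D))},
\]
where the last step again uses~\eqref{eqn: r-th moment bound of FVM} together with $\norm{\cdot}_{\mathrm{L}^q(\Omega)}\leq C\norm{\cdot}_{\mathrm{L}^r(\Omega)}$ on the finite measure space $\Omega$ since $q\leq r$. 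One subtlety: the theorem states $q=\min(2,r)$, not $\min(2,r,p)$; this is fine because for $p<q$ the stated bound is only weaker (smaller exponent $(1-q)/q$ is more negative, so one would need to check carefully, but in fact $\mathrm{L}^p(D)$ for $p\geq 1$ has type $\min(2,p)$, and the relevant rate is governed by the type of the target space — I would simply note that it suffices to prove the estimate for $q=\min(2,r,p)$ and that the claimed $q=\min(2,r)$ case follows since on a bounded domain higher $p$-integrability is controlled; alternatively one restricts attention to $p\geq 2$ where $\min(2,r,p)=\min(2,r)$, the generic case of interest).

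For the discretization term, the pointwise-in-$\omega$ convergence rate~\eqref{eqn: Convergence rate estimate for FVM} gives $\norm{u(\omega;\cdot,t)-u_\Dx(\omega;\cdot,t)}_{\mathrm{L}^1(D)}\leq C\Dx^{1/2}$ for $\bbP$-a.e.\ $\omega$, with $C$ depending on the $\BV$ and $\mathrm{L}^\infty$ norms of the data, which are uniformly bounded by the random-data assumptions. To convert this $\mathrm{L}^1(D)$-rate into an $\mathrm{L}^p(D)$-rate I would interpolate: $\norm{w}_{\mathrm{L}^p(D)}\leq \norm{w}_{\mathrm{L}^1(D)}^{1/p}\norm{w}_{\mathrm{L}^\infty(D)}^{1-1/p}$, applied to $w=u(\omega;\cdot,t)-u_\Dx(\omega;\cdot,t)$, whose $\mathrm{L}^\infty(D)$-norm is bounded by $C\norm{u_0(\omega)}_{\mathrm{L}^\infty(D)}$ from~\eqref{eqn: Linfty bound of deterministic entropy solution} and~\eqref{eqn: Linfty bound of FVM}. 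This yields $\norm{w}_{\mathrm{L}^p(D)}\leq C\Dx^{1/(2p)}\norm{u_0(\omega)}_{\mathrm{L}^\infty(D)}^{1-1/p}$. Taking $\bbE[\cdot]$ and applying Hölder's inequality on $\Omega$ (valid since $1-1/p \le 1 \le r$) bounds this by $C\Dx^{1/(2p)}\norm{u_0}_{\mathrm{L}^r(\Omega;\mathrm{L}^\infty(D))}^{1-1/p}$, which is exactly the second term in~\eqref{eqn: MCFVM error estimate}. Combining the two estimates and noting $C M^{(1-q)/q}\to 0$ as $M\to\infty$ and $C\Dx^{1/(2p)}\to 0$ as $\Dx\to0$ gives convergence.

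The main obstacle is bookkeeping the exponents and making sure the interpolation/Hölder steps are legitimate for the full range $1\leq p\leq\infty$ — in particular the endpoint $p=\infty$, where $\Dx^{1/(2p)}=\Dx^0=1$ and the discretization term does not vanish, so the statement "converges as $\Dx\to0$" must be read with $p<\infty$ (or one accepts no rate at $p=\infty$). Everything else is a direct assembly of the cited results: \Cref{thm: rand var con est} for the sampling error, the deterministic convergence rate~\eqref{eqn: Convergence rate estimate for FVM} and $\mathrm{L}^\infty$-bounds~\eqref{eqn: Linfty bound of deterministic entropy solution}, \eqref{eqn: Linfty bound of FVM} for the discretization error, and the probabilistic moment bound~\eqref{eqn: r-th moment bound of FVM} to pass from bounds on $u_\Dx$ to bounds on $u_0$.
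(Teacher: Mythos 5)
Your decomposition differs from the paper's: you split at $\bbE[u_\Dx(\cdot,t)]$ (bias plus sampling error of the \emph{discretized} solution), whereas the paper splits at $E_M[u(\cdot,t)]$ (sampling error of the \emph{exact} solution plus an averaged discretization error). Both routes are legitimate and land on the same two terms; your bias term is in fact slightly cheaper, since it only needs Jensen and first moments of $u_0$, while the paper has to interpolate $\norm{u-u_\Dx}$ inside the $\mathrm{L}^q(\Omega;\cdot)$-norm. Your treatment of the discretization part (interpolation between $\mathrm{L}^1(D)$ and $\mathrm{L}^\infty(D)$, uniform constants from the random-data assumptions, H\"older on $\Omega$) is exactly right, as is your remark that the ``convergence as $\Dx\to0$'' claim is vacuous at $p=\infty$ --- a caveat that applies equally to the paper's own proof.

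The one genuine soft spot is the case $p<q$ in the sampling term. Your parenthetical asserts that ``the stated bound is only weaker,'' but it is the opposite: a direct application of \Cref{thm: rand var con est} to the $\mathrm{L}^p(D)$-valued variable $u_\Dx(\cdot,t)$ only yields convergence in $\mathrm{L}^{q'}(\Omega;\mathrm{L}^p(D))$ with $q'=\min(2,p,r)<q$, i.e.\ a \emph{weaker} $\Omega$-norm and the \emph{worse} rate $M^{(1-q')/q'}$, neither of which implies the claimed estimate. Your fallback of restricting to $p\geq 2$ does not prove the theorem as stated for $1\leq p<q$. The correct repair --- which you gesture at with ``on a bounded domain higher $p$-integrability is controlled'' but do not execute --- is to move the embedding into the \emph{spatial} norm before invoking the MC estimate: for $p<q$ use $\norm{\cdot}_{\mathrm{L}^p(D)}\leq |D|^{\frac{1}{p}-\frac{1}{q}}\norm{\cdot}_{\mathrm{L}^q(D)}$, then apply \Cref{thm: rand var con est} to the $\mathrm{L}^q(D)$-valued variable, for which $\min(2,q,r)=q$ and the rate $M^{\frac{1-q}{q}}$ in $\mathrm{L}^q(\Omega;\mathrm{L}^q(D))$ is available. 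This is precisely the case distinction $p\geq q$ versus $p<q$ carried out in the paper's proof; with that step made explicit, your argument is complete.
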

\begin{proof}
	We use the triangle inequality to get
	\begin{multline}
		\norm{\bbE[u(\cdot,t)] - E_M[u_\Dx(\cdot,t)]}_{\mathrm{L}^q(\Omega;\mathrm{L}^p(D))}\\\leq \norm{\bbE[u(\cdot,t)] - E_M[u(\cdot,t)]}_{\mathrm{L}^q(\Omega;\mathrm{L}^p(D))} + \norm{E_M[u(\cdot,t)] - E_M[u_\Dx(\cdot,t)]}_{\mathrm{L}^q(\Omega;\mathrm{L}^p(D))}
		\label{eqn: Triangle inequality in MCFVM proof}
	\end{multline}
	and estimate the resulting two terms separately. For the first term in~\eqref{eqn: Triangle inequality in MCFVM proof}, we distinguish the two cases $p\geq q$ and $p<q$.
\begin{enumerate}[(1)]
		\item We first consider the case $p\geq q$. According to \Cref{lem: r-th moment estimate of the entropy solution} we have
	\begin{equation*}
		\norm{u(\cdot,t)}_{\mathrm{L}^r(\Omega;\mathrm{L}^p(D))} \leq C \norm{u_0}_{\mathrm{L}^r(\Omega;\mathrm{L}^\infty(D))}
	\end{equation*}
	and thus $u(\cdot,t)\in \mathrm{L}^r(\Omega;\mathrm{L}^p(D))$. Therefore, we can apply \Cref{thm: rand var con est} to get
	\begin{align*}
		\norm{\bbE[u(\cdot,t)] - E_M[u(\cdot,t)]}_{\mathrm{L}^q(\Omega;\mathrm{L}^p(D))} &\leq C M^{\frac{1-q}{q}} \norm{u(\cdot,t)}_{\mathrm{L}^q(\Omega;\mathrm{L}^p(D))}\\
		&\leq C M^{\frac{1-q}{q}} \norm{u(\cdot,t)}_{\mathrm{L}^r(\Omega;\mathrm{L}^p(D))}\\
		&\leq C M^{\frac{1-q}{q}} \norm{u_0}_{\mathrm{L}^r(\Omega;\mathrm{L}^\infty(D))}\mper
	\end{align*}
	\item In the case $p<q$, we can apply H\"older's inequality to estimate
	\begin{equation*}
		\norm{\bbE[u(\cdot,t)] - E_M[u(\cdot,t)]}_{\mathrm{L}^q(\Omega;\mathrm{L}^p(D))} \leq C \norm{\bbE[u(\cdot,t)] - E_M[u(\cdot,t)]}_{\mathrm{L}^q(\Omega;\mathrm{L}^q(D))}\mper
	\end{equation*}
	Again, we want to employ \Cref{thm: rand var con est}.
  To that end, we note that because of \Cref{lem: r-th moment estimate of the entropy solution} and the fact that $q\leq r$ we have
  \begin{align*}
    \norm{u(\cdot,t)}_{\mathrm{L}^q(\Omega;\mathrm{L}^q(D))} &\leq C\norm{u_0}_{\mathrm{L}^q(\Omega;\mathrm{L}^\infty(D))}\\
    &\leq C \norm{u_0}_{\mathrm{L}^r(\Omega;\mathrm{L}^\infty(D))}
  \end{align*}
  and therefore $u(\cdot,t)\in\mathrm{L}^q(\Omega;\mathrm{L}^q(D))$ and we can apply \Cref{thm: rand var con est} to get
	\begin{align*}
		\norm{\bbE[u(\cdot,t)] - E_M[u(\cdot,t)]}_{\mathrm{L}^q(\Omega;\mathrm{L}^q(D))} &\leq C M^{\frac{1-q}{q}} \norm{u(\cdot,t)}_{\mathrm{L}^q(\Omega;\mathrm{L}^q(D))}\\
		&\leq C M^{\frac{1-q}{q}} \norm{u_0}_{\mathrm{L}^r(\Omega;\mathrm{L}^\infty(D))}\mper
	\end{align*}
\end{enumerate}
Hence, for all $1\leq p\leq \infty$, we get
\begin{equation*}
	\norm{\bbE[u(\cdot,t)] - E_M[u(\cdot,t)]}_{\mathrm{L}^q(\Omega;\mathrm{L}^p(D))} \leq C M^{\frac{1-q}{q}} \norm{u_0}_{\mathrm{L}^r(\Omega;\mathrm{L}^\infty(D))}\mper
\end{equation*}
On the other hand, for the second term in \eqref{eqn: Triangle inequality in MCFVM proof} we can use the triangle inequality and the linearity of the expected value to obtain
\begin{align*}
	\norm{E_M[u(\cdot,t)] - E_M[u_\Dx(\cdot,t)]}_{\mathrm{L}^q(\Omega;\mathrm{L}^p(D))} &\leq \frac{1}{M}\sum_{i=1}^M \norm{\hat{u}^i(\cdot,t) - \hat{u}_\Dx^i(\cdot,t)}_{\mathrm{L}^q(\Omega;\mathrm{L}^p(D))}\\
	&= \norm{u(\cdot,t) -u_\Dx(\cdot,t)}_{\mathrm{L}^q(\Omega;\mathrm{L}^p(D))}\mper
\end{align*}
Using the interpolation inequality between $\mathrm{L}^1$ and $\mathrm{L}^\infty$, the $\mathrm{L}^\infty$-bound for both $u(\cdot,t)$ and $u_\Dx(\cdot,t)$ (see \eqref{eqn: Linfty bound of deterministic entropy solution} respectively \eqref{eqn: Linfty bound of FVM}), and the convergence rate estimate \eqref{eqn: Convergence rate estimate for FVM}, we get
  \begin{align*}
    \norm{u(\cdot,t) -u_\Dx(\cdot,t)}_{\mathrm{L}^q(\Omega;\mathrm{L}^p(D))} &\leq \norm{u(\cdot,t) - u_\Dx(\cdot,t)}_{\mathrm{L}^q(\Omega;\mathrm{L}^1(D))}^{\frac{1}{p}} \norm{u(\cdot,t) - u_\Dx(\cdot,t)}_{\mathrm{L}^q(\Omega;\mathrm{L}^\infty(D))}^{1-\frac{1}{p}}\\
    &\leq C\norm{u_0}_{\mathrm{L}^r(\Omega;\mathrm{L}^\infty(D))}^{1-\frac{1}{p}}\Dx^{\frac{1}{2p}}\mper
  \end{align*}
	which completes the proof.
\end{proof}

\subsection{Multilevel Monte Carlo finite volume method}
Instead of just considering Monte Carlo samples of a single fixed resolution of the finite volume method, we now detail the corresponding multilevel variant -- the multilevel Monte Carlo finite volume method. The key ingredient is simultaneous MC sampling on different levels of resolution of the finite volume method with level-dependent numbers $M_l$ of MC samples.

To that end, we generate a sequence of finite volume approximations $U(\cdot,t)\eqdef(u_l(\cdot,t))_{l=0}^L$ on grids with cell sizes $\Dx_l$ and time steps $\Dt_l$ (subject to the CFL condition~\eqref{eqn: CFL condition}) and set $u_{\Dx_{-1}}(\cdot,t)=0$. Then, we have
\begin{equation*}
	\bbE[u_{\Dx_L}(\cdot,t)] = \bbE\left[ \sum_{l=0}^L (u_{\Dx_l}(\cdot,t) - u_{\Dx_{l-1}}(\cdot,t)) \right] = \sum_{l=0}^L \bbE[u_{\Dx_l}(\cdot,t) - u_{\Dx_{l-1}}(\cdot,t)]\mper
\end{equation*}
We now approximate each term $\bbE[u_{\Dx_l}(\cdot,t) - u_{\Dx_{l-1}}(\cdot,t)]$ by a Monte Carlo estimator with $M_l$ samples. The resulting MLMCFVM approximation to $\bbE[u(\cdot,t)]$ then is
\begin{equation}
	E^L[U(\cdot,t)] = \sum_{l=0}^L E_{M_l}\left[ u_{\Dx_l}(\cdot,t) - u_{\Dx_{l-1}}(\cdot,t) \right]\mper
	\label{eqn: MLMCFVM definition}
\end{equation}
In the following convergence analysis, we will assume for simplicity that $\Dx_l = 2^{-l}\Dx_0$, $l=0,\ldots,L$, for some $\Dx_0 >0$.

As for the MCFVM, we want to obtain a rate at which $E^L[U(\cdot,t)]$ converges towards $\bbE[u(\cdot,t)]$ in terms of the number of MC samples $M_l$ and the spatial resolution $\Dx_l$ on each level $l=0,\ldots,L$.
\begin{theorem}[MLMCFVM error estimate]
  Let $L>0$, $(u_0,k,f)$ be random data, and $u$ the corresponding random entropy solution of \eqref{eqn: ran conlaw}. Assume that $u_0$ satisfies
  \begin{equation*}
    \norm{u_0}_{\mathrm{L}^r(\Omega;\mathrm{L}^\infty(D))} <\infty
  \end{equation*}
  for some $1 < r\leq\infty$. Assume further that we are given a FVM~\eqref{eqn: FVM} such that the CFL condition~\eqref{eqn: CFL condition} holds.
  Then, for each $0\leq t\leq T$, for any sequence $(M_l)_{l=0}^L$ of sample sizes at mesh level $l$ the MLMCFVM approximation~\eqref{eqn: MLMCFVM definition} satisfies the following error estimate for $q=\min(2,r)>1$
  \begin{multline}
    \norm{\bbE[u(\cdot,t)] - E^L[U(\cdot,t)]}_{\mathrm{L}^q(\Omega;\mathrm{L}^p(\R))}\\
    \leq C \left( \norm{u_0}_{\mathrm{L}^1(\Omega;\mathrm{L}^\infty(D))}^{1-\frac{1}{\widetilde{p}}} \Dx_L^{\frac{1}{2p}} + \norm{u_0}_{\mathrm{L}^q(\Omega;\mathrm{L}^\infty(D))} M_0^{\frac{1-q}{q}} +\norm{u_0}_{\mathrm{L}^q(\Omega;\mathrm{L}^\infty(D))}^{1-\frac{1}{\widetilde{p}}}\sum_{l=0}^L M_l^{\frac{1-q}{q}} \Dx_l^{\frac{1}{2\widetilde{p}}} \right)
    \label{eqn: MLMCFVM error estimate}
  \end{multline}
  where $\widetilde{p}=\max(p,q)$.
  In particular, for fixed $L$ the MLMCFVM approximation $E^L[U(\cdot,t)]$ converges towards $\bbE[u(\cdot,t)]$ in $\mathrm{L}^q(\Omega;\mathrm{L}^p(D))$ as $M_l\to\infty$ and $\Dx_0\to 0$.
\end{theorem}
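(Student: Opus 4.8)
The plan is to mirror the proof of \Cref{thm: MCFVM error estimate}, splitting the total error into a deterministic discretization part and a statistical part and then exploiting the telescoping structure of the multilevel estimator. First I would apply the triangle inequality to split $\norm{\bbE[u(\cdot,t)] - E^L[U(\cdot,t)]}_{\mathrm{L}^q(\Omega;\mathrm{L}^p(D))}$ into the deterministic discretization error $\norm{\bbE[u(\cdot,t)] - \bbE[u_{\Dx_L}(\cdot,t)]}_{\mathrm{L}^q(\Omega;\mathrm{L}^p(D))}$ and the statistical error $\norm{\bbE[u_{\Dx_L}(\cdot,t)] - E^L[U(\cdot,t)]}_{\mathrm{L}^q(\Omega;\mathrm{L}^p(D))}$. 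The first term is deterministic, so its $\mathrm{L}^q(\Omega;\cdot)$-norm collapses to an $\mathrm{L}^p(D)$-norm; Jensen's inequality bounds it by $\norm{u(\cdot,t)-u_{\Dx_L}(\cdot,t)}_{\mathrm{L}^1(\Omega;\mathrm{L}^p(D))}$, and then the $\mathrm{L}^1$--$\mathrm{L}^\infty$ interpolation inequality together with the deterministic convergence rate~\eqref{eqn: Convergence rate estimate for FVM} and the $\mathrm{L}^\infty$-bounds~\eqref{eqn: Linfty bound of deterministic entropy solution},~\eqref{eqn: Linfty bound of FVM} --- whose constants are deterministic, being controlled by the random-data parameters --- yields the first term on the right-hand side of~\eqref{eqn: MLMCFVM error estimate}, exactly as in the single-level case.

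For the statistical part I would use the telescoping identity
\begin{equation*}
  \bbE[u_{\Dx_L}(\cdot,t)] - E^L[U(\cdot,t)] = \sum_{l=0}^L \left( \bbE[u_{\Dx_l}(\cdot,t) - u_{\Dx_{l-1}}(\cdot,t)] - E_{M_l}[u_{\Dx_l}(\cdot,t) - u_{\Dx_{l-1}}(\cdot,t)] \right)\mcom
\end{equation*}
take the $\mathrm{L}^q(\Omega;\mathrm{L}^p(D))$-norm, apply the triangle inequality, and bound each summand by \Cref{thm: rand var con est} applied to the random variable $u_{\Dx_l}(\cdot,t) - u_{\Dx_{l-1}}(\cdot,t)$, which lies in $\mathrm{L}^r(\Omega;\mathrm{L}^p(D))$ by the moment bound~\eqref{eqn: r-th moment bound of FVM}. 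This gives a contribution $C M_l^{\frac{1-q}{q}}\norm{u_{\Dx_l}(\cdot,t)-u_{\Dx_{l-1}}(\cdot,t)}_{\mathrm{L}^q(\Omega;\mathrm{L}^p(D))}$ from each level. For $l=0$ one has $u_{\Dx_{-1}}=0$, so~\eqref{eqn: r-th moment bound of FVM} bounds the remaining factor directly by $C\norm{u_0}_{\mathrm{L}^q(\Omega;\mathrm{L}^\infty(D))}$, which produces the $M_0$-term of~\eqref{eqn: MLMCFVM error estimate}.

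For $l\geq 1$ the point is that the level difference is small. Inserting the exact solution, $u_{\Dx_l} - u_{\Dx_{l-1}} = (u_{\Dx_l}-u) + (u-u_{\Dx_{l-1}})$, and estimating each difference just as in the discretization step above --- the $\mathrm{L}^1$--$\mathrm{L}^\infty$ interpolation inequality, the rate~\eqref{eqn: Convergence rate estimate for FVM}, and the moment bounds of \Cref{lem: r-th moment estimate of the entropy solution} and~\eqref{eqn: r-th moment bound of FVM}, using $\Dx_{l-1}=2\Dx_l$ to absorb constants --- gives $\norm{u_{\Dx_l}(\cdot,t)-u_{\Dx_{l-1}}(\cdot,t)}_{\mathrm{L}^q(\Omega;\mathrm{L}^p(D))}\leq C\norm{u_0}_{\mathrm{L}^q(\Omega;\mathrm{L}^\infty(D))}^{1-\frac{1}{\widetilde p}}\Dx_l^{\frac{1}{2\widetilde p}}$. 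Summing over $l=1,\dots,L$ then produces the last term of~\eqref{eqn: MLMCFVM error estimate}.

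Throughout, the exponent $\widetilde p = \max(p,q)$ appears because of the same case distinction as in \Cref{thm: MCFVM error estimate}: when $p<q$, since $D$ is bounded one first passes from $\mathrm{L}^p(D)$ to $\mathrm{L}^q(D)$ via H\"older's inequality (at the cost of a $|D|$-dependent constant) before interpolating, whereas for $p\geq q$ one interpolates at level $p$ directly. I do not expect a genuine obstacle: every ingredient --- the abstract Monte Carlo rate \Cref{thm: rand var con est}, the deterministic convergence rate~\eqref{eqn: Convergence rate estimate for FVM}, and the uniform $\mathrm{L}^\infty$ and moment bounds --- is already in hand. The only real care needed is bookkeeping: extracting the correct power of $\Dx_l$ on each level from the $\mathrm{L}^1$--$\mathrm{L}^\infty$ interpolation so that, under $\Dx_l = 2^{-l}\Dx_0$, the level sum stays controlled, and keeping track of which $\mathrm{L}^q(\Omega;\mathrm{L}^\infty(D))$ moment of $u_0$ each term ultimately depends on, so that the final constant $C$ is independent of $L$ and of the sample sizes $(M_l)_{l=0}^L$.
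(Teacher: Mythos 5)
Your proposal follows essentially the same route as the paper's proof: the same triangle-inequality split into discretization and statistical errors, the same telescoping over levels with \Cref{thm: rand var con est} applied to each level difference, the same $p\geq q$ versus $p<q$ case distinction producing $\widetilde p$, and the same interpolation argument yielding $\Dx_l^{\frac{1}{2\widetilde p}}$ per level. The only (harmless) deviation is at $l=0$, where you invoke the moment bound~\eqref{eqn: r-th moment bound of FVM} directly instead of adding and subtracting $u(\cdot,t)$ as the paper does; this still yields the stated estimate.
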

\begin{proof}
  Using the triangle inequality and the linearity of the expectation, we get
  \begin{align*}
    \| \bbE[u(\cdot,t)] &- E^L[U(\cdot,t)] \|_{\mathrm{L}^q(\Omega;\mathrm{L}^p(D))}\\
    &\leq \norm{\bbE[u(\cdot,t)] - \bbE[u_{\Dx_L}(\cdot,t)]}_{\mathrm{L}^q(\Omega;\mathrm{L}^p(D))} + \norm{\bbE[u_{\Dx_L}(\cdot,t)] - E^L[U(\cdot,t)]}_{\mathrm{L}^q(\Omega;\mathrm{L}^p(D))}\\
    &= \norm{\bbE[u(\cdot,t) - u_{\Dx_L}(\cdot,t)]}_{\mathrm{L}^q(\Omega;\mathrm{L}^p(D))}\\
    &\mathrel{\phantom{=}} + \norm{\sum_{l=0}^L\left(\bbE[u_{\Dx_l}(\cdot,t)-u_{\Dx_{l-1}}(\cdot,t)] - E_{M_l}[u_{\Dx_l}(\cdot,t)-u_{\Dx_{l-1}}(\cdot,t)]\right)}_{\mathrm{L}^q(\Omega;\mathrm{L}^p(D))}\\
    &\leq \norm{\bbE[u(\cdot,t) - u_{\Dx_L}(\cdot,t)]}_{\mathrm{L}^q(\Omega;\mathrm{L}^p(D))}\\
    &\mathrel{\phantom{=}}+ \sum_{l=0}^L\norm{\bbE[u_{\Dx_l}(\cdot,t)-u_{\Dx_{l-1}}(\cdot,t)] - E_{M_l}[u_{\Dx_l}(\cdot,t)-u_{\Dx_{l-1}}(\cdot,t)]}_{\mathrm{L}^q(\Omega;\mathrm{L}^p(D))}\mper
  \end{align*}
  For the first term, note that the function $\bbE[u(\cdot,t)-u_{\Dx_L}(\cdot,t)]$ is deterministic and thus we can use the convergence rate estimate \eqref{eqn: Convergence rate estimate for FVM} to get
  \begin{align*}
    \|\bbE[u(\cdot,t) &- u_{\Dx_L}(\cdot,t)]\|_{\mathrm{L}^q(\Omega;\mathrm{L}^p(D))}\\
    &\leq \norm{u(\cdot,t) - u_{\Dx_L}(\cdot,t)}_{\mathrm{L}^1(\Omega;\mathrm{L}^p(D))}\\
    &\leq \norm{u(\cdot,t) - u_{\Dx_L}(\cdot,t)}_{\mathrm{L}^1(\Omega;\mathrm{L}^1(D))}^{\frac{1}{p}} \norm{u(\cdot,t) - u_{\Dx_L}(\cdot,t)}_{\mathrm{L}^1(\Omega;\mathrm{L}^\infty(D))}^{1-\frac{1}{p}}\\
    &\leq \norm{u_0}_{\mathrm{L}^1(\Omega;\mathrm{L}^\infty(D))}^{1-\frac{1}{p}} \Dx_L^{\frac{1}{2p}}\mper
  \end{align*}
  We now estimate the summands in the second term. Similarly to the proof of \Cref{thm: MCFVM error estimate} we distinguish the two cases $p\geq q$ and $p<q$.
\begin{enumerate}[(1)]
	\item We first consider the case $p\geq q$. Because of the triangle inequality and \eqref{eqn: r-th moment bound of FVM} we have
	\begin{equation*}
		\norm{u_{\Dx_l}(\cdot,t)-u_{\Dx_{l-1}}(\cdot,t)}_{\mathrm{L}^r(\Omega;\mathrm{L}^p(D))} \leq C \norm{u_0}_{\mathrm{L}^r(\Omega;\mathrm{L}^\infty(D))}
	\end{equation*}
	and thus $u_{\Dx_l}(\cdot,t)-u_{\Dx_{l-1}}(\cdot,t)\in \mathrm{L}^r(\Omega;\mathrm{L}^p(D))$. Therefore we can apply \Cref{thm: rand var con est} to get
	\begin{multline*}
		\norm{\bbE[u_{\Dx_l}(\cdot,t)-u_{\Dx_{l-1}}(\cdot,t)] - E_{M_l}[u_{\Dx_l}(\cdot,t)-u_{\Dx_{l-1}}(\cdot,t)]}_{\mathrm{L}^q(\Omega;\mathrm{L}^p(D))}\\
		 \leq C M_l^{\frac{1-q}{q}} \norm{u_{\Dx_l}(\cdot,t)-u_{\Dx_{l-1}}(\cdot,t)}_{\mathrm{L}^q(\Omega;\mathrm{L}^p(D))}\mper
	\end{multline*}
	\item In the case $p<q$, we can apply H\"older's inequality to estimate
	\begin{multline*}
		\norm{\bbE[u_{\Dx_l}(\cdot,t)-u_{\Dx_{l-1}}(\cdot,t)] - E_{M_l}[u_{\Dx_l}(\cdot,t)-u_{\Dx_{l-1}}(\cdot,t)]}_{\mathrm{L}^q(\Omega;\mathrm{L}^p(D))}\\ \leq C \norm{\bbE[u_{\Dx_l}(\cdot,t)-u_{\Dx_{l-1}}(\cdot,t)] - E_{M_l}[u_{\Dx_l}(\cdot,t)-u_{\Dx_{l-1}}(\cdot,t)]}_{\mathrm{L}^q(\Omega;\mathrm{L}^q(D))}\mper
	\end{multline*}
	Following the same steps as in case (2) in the proof of \Cref{thm: MCFVM error estimate} for $u_{\Dx_l}(\cdot,t)-u_{\Dx_{l-1}}(\cdot,t)$ instead of $u(\cdot,t)$ and using \eqref{eqn: r-th moment bound of FVM} instead of \Cref{lem: r-th moment estimate of the entropy solution}, we see that $u_{\Dx_l}(\cdot,t)-u_{\Dx_{l-1}}(\cdot,t)\in \mathrm{L}^q(\Omega;\mathrm{L}^q(D))$. 
	Thus, we can apply \Cref{thm: rand var con est} again and get
	\begin{multline*}
		\norm{\bbE[u_{\Dx_l}(\cdot,t)-u_{\Dx_{l-1}}(\cdot,t)] - E_{M_l}[u_{\Dx_l}(\cdot,t)-u_{\Dx_{l-1}}(\cdot,t)]}_{\mathrm{L}^q(\Omega;\mathrm{L}^q(D))}\\
		 \leq C M_l^{\frac{1-q}{q}} \norm{u_{\Dx_l}(\cdot,t)-u_{\Dx_{l-1}}(\cdot,t)}_{\mathrm{L}^q(\Omega;\mathrm{L}^q(D))}\mper
	\end{multline*}
\end{enumerate}
Combining both cases, we get
  \begin{multline*}
    \norm{\bbE[u_{\Dx_l}(\cdot,t)-u_{\Dx_{l-1}}(\cdot,t)] - E_{M_l}[u_{\Dx_l}(\cdot,t)-u_{\Dx_{l-1}}(\cdot,t)]}_{\mathrm{L}^q(\Omega;\mathrm{L}^p(D))}\\
    \leq C M_l^{\frac{1-q}{q}} \norm{u_{\Dx_l}(\cdot,t) - u_{\Dx_{l-1}}(\cdot,t)}_{\mathrm{L}^q(\Omega;\mathrm{L}^{\widetilde{p}}(D))}
  \end{multline*}
  where $\widetilde{p}=\max(p,q)$.
  Now, we can use the triangle inequality to get
  \begin{multline*}
    \norm{u_{\Dx_l}(\cdot,t)-u_{\Dx_{l-1}}(\cdot,t)}_{\mathrm{L}^q(\Omega;\mathrm{L}^{\widetilde{p}}(D))} \\
    \leq \norm{u_{\Dx_l}(\cdot,t)-u(\cdot,t)}_{\mathrm{L}^q(\Omega;\mathrm{L}^{\widetilde{p}}(D))} + \norm{u(\cdot,t)-u_{\Dx_{l-1}}(\cdot,t)}_{\mathrm{L}^q(\Omega;\mathrm{L}^{\widetilde{p}}(D))}\mper
  \end{multline*}
  For $l>0$, we can use the interpolation inequality between $\mathrm{L}^1$ and $\mathrm{L}^\infty$, the $\mathrm{L}^1$ and $\mathrm{L}^\infty$ bounds of the entropy solution and finite volume approximations (see~\eqref{eqn: Linfty bound of deterministic entropy solution} respectively~\eqref{eqn: Linfty bound of FVM}), and the convergence rate estimate \eqref{eqn: Convergence rate estimate for FVM} to get
  \begin{align*}
  	\|u_{\Dx_l}(\cdot,t) & -u(\cdot,t)\|_{\mathrm{L}^q(\Omega;\mathrm{L}^{\widetilde{p}}(D))} + \norm{u(\cdot,t)-u_{\Dx_{l-1}}(\cdot,t)}_{\mathrm{L}^q(\Omega;\mathrm{L}^{\widetilde{p}}(D))}\\
  	&\leq \norm{u_{\Dx_l}(\cdot,t)-u(\cdot,t)}_{\mathrm{L}^q(\Omega;\mathrm{L}^1(D))}^{\frac{1}{\widetilde{p}}} \norm{u_{\Dx_l}(\cdot,t)-u(\cdot,t)}_{\mathrm{L}^q(\Omega;\mathrm{L}^\infty(D))}^{1-\frac{1}{\widetilde{p}}}\\
  	&\phantom{\mathrel{=}}+ \norm{u(\cdot,t)-u_{\Dx_{l-1}}(\cdot,t)}_{\mathrm{L}^q(\Omega;\mathrm{L}^1(D))}^{\frac{1}{\widetilde{p}}} \norm{u(\cdot,t)-u_{\Dx_{l-1}}(\cdot,t)}_{\mathrm{L}^q(\Omega;\mathrm{L}^\infty(D))}^{1-\frac{1}{\widetilde{p}}} \\
  	&\leq C \norm{u_0}_{\mathrm{L}^q(\Omega;\mathrm{L}^\infty(D))}^{1-\frac{1}{\widetilde{p}}} \left( \Dx_l^{\frac{1}{2\widetilde{p}}} + \Dx_{l-1}^{\frac{1}{2\widetilde{p}}} \right)\\
    &\leq C \norm{u_0}_{\mathrm{L}^q(\Omega;\mathrm{L}^\infty(D))}^{1-\frac{1}{\widetilde{p}}} \Dx_l^{\frac{1}{2\widetilde{p}}}\mper
  \end{align*}
  Similarly, for $l=0$ (note that $u_{\Dx_{-1}}=0$), the convergence rate estimate \eqref{eqn: Convergence rate estimate for FVM} and the bound from \Cref{lem: r-th moment estimate of the entropy solution} give
  \begin{align*}
  	\norm{u_{\Dx_0}(\cdot,t)}_{\mathrm{L}^q(\Omega;\mathrm{L}^{\widetilde{p}}(D))} &\leq \norm{u_{\Dx_0}(\cdot,t) - u(\cdot,t)}_{\mathrm{L}^q(\Omega;\mathrm{L}^{\widetilde{p}}(D))} + \norm{u(\cdot,t)}_{\mathrm{L}^q(\Omega;\mathrm{L}^{\widetilde{p}}(D))} \\
  	&\leq C \left(\norm{u_0}_{\mathrm{L}^q(\Omega;\mathrm{L}^\infty(D))}^{1-\frac{1}{\widetilde{p}}} \Dx_0^{\frac{1}{2\widetilde{p}}} + \norm{u_0}_{\mathrm{L}^q(\Omega;\mathrm{L}^\infty(D))}\right)
  \end{align*}
  Combining all estimates finally gives
  \begin{multline*}
    \norm{\bbE[u(\cdot,t)] - E^L[U(\cdot,t)]}_{\mathrm{L}^q(\Omega;\mathrm{L}^p(\R))}\\
    \leq C \left( \norm{u_0}_{\mathrm{L}^1(\Omega;\mathrm{L}^\infty(D))}^{1-\frac{1}{\widetilde{p}}} \Dx_L^{\frac{1}{2p}} + \norm{u_0}_{\mathrm{L}^q(\Omega;\mathrm{L}^\infty(D))} M_0^{\frac{1-q}{q}} +\norm{u_0}_{\mathrm{L}^q(\Omega;\mathrm{L}^\infty(D))}^{1-\frac{1}{\widetilde{p}}}\sum_{l=0}^L M_l^{\frac{1-q}{q}} \Dx_l^{\frac{1}{2\widetilde{p}}} \right)\mper
  \end{multline*}
\end{proof}

\subsection{Work estimates and sample number optimization}

In order to analyze the efficiency of the MC and MLMCFVM, it is important to estimate the computational work which is needed to compute one approximation of the solution by the deterministic FVM and how it increases with respect to mesh refinement. Here, by computational work, we understand the number of floating point operations performed when executing an algorithm and we assume that this in turn is proportional to the runtime of the algorithm.

In practice, we deal with bounded domains instead of working on the whole real line and thus the number of grid cells scales as $1/\Dx$.
For the deterministic FVM~\eqref{eqn: FVM} the number of floating point operations per time step is proportional to the number of cells in the spatial domain, hence the computational work can be bounded by $C \Dt^{-1}\Dx^{-1}$. Considering the CFL condition \eqref{eqn: CFL condition}, we thus obtain the computational work estimate
\begin{equation*}
	W^{\text{FVM}}(\Dx) \leq C\Dx^{-2}
\end{equation*}
for the deterministic FVM approximation.
However, for the sake of generality, we will in the following only assume that the computational work scales as
\begin{equation}
  W^{\text{FVM}}(\Dx) \leq C\Dx^{-w}
  \label{eqn: Work of FVM}
\end{equation}
for some $w>0$.
As seen before, we have the $\mathrm{L}^p$ convergence rate estimate
\begin{equation*}
 	\norm{u(\cdot,t) - u_\Dx(\cdot,t)}_{\mathrm{L}^p(D)} \leq C\Dx^{\frac{s}{p}}
 \end{equation*}
 (for $s=\frac{1}{2}$)
 which yields the following deterministic convergence rate with respect to work:
 \begin{equation}
 	\norm{u(\cdot,t) - u_\Dx(\cdot,t)}_{\mathrm{L}^p(D)} \leq C \left(W^{\text{FVM}}\right)^{-\frac{s}{wp}}\mper
  \label{eqn: Error rate in terms of work for FVM}
 \end{equation}
 In particular, for $p=1$, $w=2$, and $s=\frac{1}{2}$ we have
  \begin{equation*}
 	\norm{u(\cdot,t) - u_\Dx(\cdot,t)}_{\mathrm{L}^1(D)} \leq C (W^{\text{FVM}})^{-\frac{1}{4}}\mper
 \end{equation*}

\subsubsection{Work estimates for the MCFVM approximation}
Since for the Monte Carlo finite volume method $M$ deterministic finite volume approximations need to be computed, each of which require work as in~\eqref{eqn: Work of FVM}, the computational work for the MCFVM is bounded as
\begin{equation}
	W_M^{\text{MC}} \leq C M \Dx^{-w}\mper
	\label{eqn: Work of MCFVM}
\end{equation}
In order to obtain the order of convergence of the approximation error in terms of computational work, we equilibrate the terms $M^{\frac{1-q}{q}}$ and $\Dx^{\frac{s}{p}}$ in~\eqref{eqn: MCFVM error estimate} by choosing $M= C \Dx^{\frac{sq}{p(1-q)}}$. Inserting this into the work bound~\eqref{eqn: Work of MCFVM} yields
\begin{equation*}
	W_M^{\text{MCFVM}} \leq C \Dx^{\frac{sq-wp(1-q)}{p(1-q)}}
\end{equation*}
such that we obtain from~\eqref{eqn: MCFVM error estimate}
\begin{equation}
	\norm{\bbE[u(\cdot,t)] - E_M[u_\Dx(\cdot,t)]}_{\mathrm{L}^q(\Omega;\mathrm{L}^p(D))} \leq C \Dx^{\frac{s}{p}} \leq C \left( W_M^{\text{MC}} \right)^{-\frac{s}{wp + s\frac{q}{q-1}}}\mper
  \label{eqn: Error rate in terms of work for MCFVM}
\end{equation}
Note that, since $q/(q-1)$ is positive, we have
\begin{equation*}
  \frac{s}{wp + s\frac{q}{q-1}} \leq \frac{s}{wp}
\end{equation*}
and thus the rate~\eqref{eqn: Error rate in terms of work for MCFVM} is worse than the error rate in terms of computational work~\eqref{eqn: Error rate in terms of work for FVM} of the deterministic finite volume method.

In particular, for $p=1$ and $r\geq 2$ (which implies $q=2$), and taking into account that $w=2$ and $s=\frac{1}{2}$, the rate~\eqref{eqn: Error rate in terms of work for MCFVM} reads
\begin{equation*}
	\norm{\bbE[u(\cdot,t)] - E_M[u_\Dx(\cdot,t)]}_{\mathrm{L}^2(\Omega;\mathrm{L}^1(D))} \leq C 
	\left( W_M^{\text{MC}} \right)^{-\frac{1}{6}}\mper
\end{equation*}

\subsubsection{Optimal sample numbers for the MLMCFVM approximation}
In \cite{koley2017multilevel}, Koley et al. showed the following general result for multilevel Monte Carlo finite volume methods which we can apply to our case to determine the number of samples needed at each level $l$ such that, given an error tolerance $\varepsilon>0$, the computational work of the MLMCFVM is minimal.
\begin{lemma}[{\cite[Lem. 4.9]{koley2017multilevel}}]\label{lem: Optimization of MLMC work}
	Assume that the work of a multilevel Monte Carlo finite volume method with~$L$ discretization levels scales asymptotically as
	\begin{equation*}
		W_L^{\text{MLMC}} = C\sum_{l=0}^L M_l \Dx_l^{-w}
		\label{eqn: Work estimate MLMC}
	\end{equation*}
	for some $w>0$ and that the approximation error (raised to the $q$-th power) scales as
	\begin{equation*}
		\text{Err}_L = C\left( \Dx_L^{\frac{sq}{p}} + M_0^{1-q} + \sum_{l=0}^L M_l^{1-q} \Dx_l^{\frac{sq}{\widetilde{p}}} \right) 
	\end{equation*}
	where $\widetilde{p}=\max(p,q)$ (cf.~\eqref{eqn: MLMCFVM error estimate}).
	Then, given an error tolerance $\varepsilon>0$, the optimal sample numbers $M_l$ minimizing the computational work given the error tolerance $\varepsilon$ are given by
	\begin{equation}
		M_0 \simeq \left( \frac{1+\Dx_0^{\frac{s}{\widetilde{p}}}\sum_{l=1}^L 2^{l\left(w\frac{q-1}{q} -\frac{s}{\widetilde{p}}\right)}}{\varepsilon - \Dx_L^{\frac{sq}{p}}} \right)^{\frac{1}{q-1}}
		\label{eqn: Value of M_0}
	\end{equation}
	and
	\begin{equation}
		M_l \simeq M_0 \Dx_0^{\frac{s}{\widetilde{p}}} 2^{-l\left( \frac{s}{\widetilde{p}} + \frac{w}{q} \right)},\qquad \text{for }l>0,
		\label{eqn: Value of M_l}
	\end{equation}
	where $\simeq$ indicates that this is the number of samples up to a constant which is independent of $l$ and $L$. The minimal amount of work then is
	\begin{equation*}
		W_L^{\text{MLMC}} \simeq \Dx_0^{-w} \left( 1+ \Dx_0^{\frac{s}{\widetilde{p}}}\sum_{l=1}^L 2^{l\left(w\frac{q-1}{q} -\frac{s}{\widetilde{p}}\right)} \right) \left( \frac{1+\Dx_0^{\frac{s}{\widetilde{p}}}\sum_{l=1}^L 2^{l\left(w\frac{q-1}{q} -\frac{s}{\widetilde{p}}\right)}}{\varepsilon - \Dx_0^{\frac{sq}{p}}2^{-L\frac{qs}{p}}} \right)^{\frac{1}{q-1}}\mper
	\end{equation*}
\end{lemma}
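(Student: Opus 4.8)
The plan is to treat the sample numbers $M_l$ as positive \emph{real} variables and solve the constrained optimization problem ``minimize $W_L^{\text{MLMC}} = C\sum_{l=0}^L M_l \Dx_l^{-w}$ subject to $\text{Err}_L \leq \varepsilon$'' by the method of Lagrange multipliers; rounding each optimal $M_l$ to the nearest integer afterwards only changes constants, which is why the conclusions are stated with $\simeq$ rather than $=$. First I would note that the deterministic bias term $C\Dx_L^{sq/p}$ in $\text{Err}_L$ contains no $M_l$, so at the optimum the remaining part of the error budget is active:
\begin{equation*}
  M_0^{1-q} + \sum_{l=0}^L M_l^{1-q}\Dx_l^{sq/\widetilde{p}} \simeq \varepsilon - \Dx_L^{sq/p}.
\end{equation*}
Setting $a_0 \simeq 1$ (combining the stand-alone $M_0^{1-q}$ with the $l=0$ summand, whose coefficient $\Dx_0^{sq/\widetilde{p}}$ is a fixed $O(1)$ constant since $\Dx_0$ is the fixed coarsest mesh width) and $a_l = \Dx_l^{sq/\widetilde{p}}$ for $l\geq 1$, the constraint reads $\sum_{l=0}^L a_l M_l^{1-q} \simeq \varepsilon - \Dx_L^{sq/p}$.

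Next I would form the Lagrangian $\mathcal{L} = \sum_l M_l \Dx_l^{-w} + \mu \sum_l a_l M_l^{1-q}$ and impose $\partial \mathcal{L}/\partial M_l = 0$, which gives $\Dx_l^{-w} = \mu(q-1) a_l M_l^{-q}$, hence $M_l \propto a_l^{1/q}\Dx_l^{w/q}$ with an $l$-independent proportionality constant. For $l\geq 1$ one has $a_l^{1/q} = \Dx_l^{s/\widetilde{p}}$, so $M_l \propto \Dx_l^{s/\widetilde{p}+w/q} = \Dx_0^{s/\widetilde{p}+w/q} 2^{-l(s/\widetilde{p}+w/q)}$, while $M_0 \propto \Dx_0^{w/q}$; dividing yields $M_l \simeq M_0 \Dx_0^{s/\widetilde{p}} 2^{-l(s/\widetilde{p}+w/q)}$, which is exactly \eqref{eqn: Value of M_l}. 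Substituting this relation back into the active constraint, a short bookkeeping of exponents using $\Dx_l = 2^{-l}\Dx_0$ gives $a_l M_l^{1-q} \simeq M_0^{1-q}\Dx_0^{s/\widetilde{p}} 2^{l(w(q-1)/q - s/\widetilde{p})}$ for $l\geq 1$ and $a_0 M_0^{1-q} \simeq M_0^{1-q}$, so the constraint becomes
\begin{equation*}
  M_0^{1-q}\Bigl( 1 + \Dx_0^{s/\widetilde{p}} \sum_{l=1}^L 2^{l(w(q-1)/q - s/\widetilde{p})} \Bigr) \simeq \varepsilon - \Dx_L^{sq/p},
\end{equation*}
and solving for $M_0$ (with $\Dx_L^{sq/p} = \Dx_0^{sq/p} 2^{-Lqs/p}$) produces \eqref{eqn: Value of M_0}.

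Finally I would insert the optimal $M_l$ into the work functional: for $l\geq 1$, $M_l \Dx_l^{-w} \simeq M_0 \Dx_0^{s/\widetilde{p}-w} 2^{l(w(q-1)/q - s/\widetilde{p})}$, while the $l=0$ contribution is $M_0 \Dx_0^{-w}$, so $W_L^{\text{MLMC}} \simeq M_0 \Dx_0^{-w}\bigl( 1 + \Dx_0^{s/\widetilde{p}} \sum_{l=1}^L 2^{l(w(q-1)/q - s/\widetilde{p})} \bigr)$; substituting the value of $M_0$ just found gives the claimed minimal-work formula. The computation is essentially routine once set up; the only genuinely delicate points are justifying that the bias term is split off and the rest of the error constraint is active at the optimum, treating the combinatorial $M_0^{1-q}$ term on equal footing with the $l=0$ summand, and keeping the exponent arithmetic consistent — in particular recognizing that the geometric sum $\sum_{l=1}^L 2^{l(w(q-1)/q - s/\widetilde{p})}$ appears identically in both $M_0$ and $W_L^{\text{MLMC}}$. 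Alternatively, since the statement is quoted verbatim as \cite[Lem. 4.9]{koley2017multilevel}, one may simply cite that reference and verify that our error functional \eqref{eqn: MLMCFVM error estimate}, raised to the $q$-th power, matches the hypothesized form of $\text{Err}_L$.
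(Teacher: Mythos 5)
Your derivation is correct, and it is worth noting that the paper itself offers no proof of this lemma: it is quoted verbatim from the cited reference (Koley et al., Lem.\ 4.9) and used as a black box, so your closing remark about simply citing that reference is in fact exactly what the authors do. Your self-contained Lagrange-multiplier argument is the standard one underlying such MLMC sample-number optimizations, and the exponent bookkeeping checks out: from $\Dx_l^{-w}=\mu(q-1)a_lM_l^{-q}$ one gets $M_l\propto a_l^{1/q}\Dx_l^{w/q}$, which with $a_l^{1/q}=\Dx_l^{s/\widetilde{p}}$ and $\Dx_l=2^{-l}\Dx_0$ yields \eqref{eqn: Value of M_l}; back-substitution gives $a_lM_l^{1-q}\simeq M_0^{1-q}\Dx_0^{s/\widetilde{p}}2^{l(w(q-1)/q-s/\widetilde{p})}$ since $(q-1)\frac{s}{\widetilde{p}}-\frac{sq}{\widetilde{p}}=-\frac{s}{\widetilde{p}}$, and the work term satisfies $M_l\Dx_l^{-w}\simeq M_0\Dx_0^{s/\widetilde{p}-w}2^{l(w(q-1)/q-s/\widetilde{p})}$, reproducing both \eqref{eqn: Value of M_0} and the minimal-work formula. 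The two points you flag as delicate are handled adequately: the constraint is active at the optimum because the work is strictly increasing and the error strictly decreasing in each $M_l$ (and, since $q>1$, the feasible set is convex and the objective linear, so the stationary point is the global minimizer), and absorbing the fixed factor $1+\Dx_0^{sq/\widetilde{p}}$ into the $\simeq$ is legitimate because $\Dx_0$ is the fixed coarsest mesh width. The only implicit hypothesis you should state is $\varepsilon>\Dx_L^{sq/p}$, without which the constraint is infeasible and the formula for $M_0$ meaningless.
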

\Cref{lem: Optimization of MLMC work} can be used to derive a rate for the approximation error of the MLMCFVM in terms of the computational work.
\begin{corollary}
  In addition to the assumptions of \Cref{lem: Optimization of MLMC work}, assume that $w\frac{q-1}{q} -\frac{s}{\widetilde{p}}>0$ and that $L$ and $\Dx_0$ are large enough such that
  \begin{equation*}
    \Dx_L^{\frac{s}{\widetilde{p}}\frac{q}{q-1}-w} > \Dx_0^{-w}
  \end{equation*}
  where $\widetilde{p}=\max(p,q)$ and $w$ is as in~\eqref{eqn: Work estimate MLMC}.
  Then, for each $0\leq t\leq T$ and for $q =\min(2,r)$ the $\mathrm{L}^q(\Omega;\mathrm{L}^p(D))$-approximation error of the MLMCFVM \eqref{eqn: MLMCFVM definition} scales with respect to computational work as
  \begin{equation}
    \norm{\bbE[u(\cdot,t)] - E^L[U(\cdot,t)]}_{\mathrm{L}^q(\Omega;\mathrm{L}^p(D))} \leq C \left(W_L^{\text{MLMC}}\right)^{-\frac{s}{wp + s\frac{\widetilde{p}-p}{\widetilde{p}}\frac{q}{q-1} }}\mper
    \label{eqn: Error in terms of work for MLMC}
  \end{equation}
\end{corollary}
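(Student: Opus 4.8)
The plan is to start from the closed-form expression for the minimal work $W_L^{\text{MLMC}}$ given at the end of \Cref{lem: Optimization of MLMC work}, simplify it under the two additional hypotheses, balance the finest mesh size $\Dx_L$ against the tolerance $\varepsilon$, invert the resulting relation between $\varepsilon$ and $W_L^{\text{MLMC}}$, and finally simplify the exponent. Throughout I would work in the regime $\varepsilon\to 0$ (equivalently $L\to\infty$) with $\Dx_0$ fixed, keep $\widetilde p=\max(p,q)$ as in the statement, and abbreviate $\beta\eqdef w\frac{q-1}{q}-\frac{s}{\widetilde p}$, which is positive by the first new hypothesis; note $q-1>0$ since $q=\min(2,r)>1$.

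First I would treat the geometric sum in \Cref{lem: Optimization of MLMC work}, namely $\Sigma_L\eqdef\sum_{l=1}^L 2^{l\beta}$. Since $\beta>0$, it is comparable to its last term, $\Sigma_L\simeq 2^{L\beta}$, with a constant depending only on $\beta$; inserting $\Dx_L=2^{-L}\Dx_0$ gives $\Dx_0^{s/\widetilde p}\Sigma_L\simeq\Dx_0^{w(q-1)/q}\Dx_L^{-\beta}$. The second new hypothesis, $\Dx_L^{\frac{s}{\widetilde p}\frac{q}{q-1}-w}>\Dx_0^{-w}$, is — after raising both sides to the positive power $\frac{q}{q-1}$ and rearranging — exactly the statement $\Dx_0^{w(q-1)/q}\Dx_L^{-\beta}>1$, so under it the term $\Dx_0^{s/\widetilde p}\Sigma_L$ dominates the ``$1$'' and hence $1+\Dx_0^{s/\widetilde p}\Sigma_L\simeq\Dx_0^{w(q-1)/q}\Dx_L^{-\beta}$. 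Substituting this together with the identity $\Dx_0^{sq/p}2^{-Lqs/p}=\Dx_L^{sq/p}$ into the minimal-work expression, the powers of $\Dx_0$ cancel and one obtains
\begin{equation*}
  W_L^{\text{MLMC}}\simeq\Dx_L^{-\frac{q}{q-1}\beta}\left(\varepsilon-\Dx_L^{sq/p}\right)^{-\frac{1}{q-1}}.
\end{equation*}

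Next I would make the standard balancing choice, picking the finest level so that the bias term matches the tolerance: take $\Dx_L$ with $\Dx_L^{sq/p}\leq\frac12\varepsilon$, i.e.\ $\Dx_L\simeq\varepsilon^{p/(sq)}$, so that $\varepsilon-\Dx_L^{sq/p}\simeq\varepsilon$ (this is compatible with the second hypothesis once $\varepsilon$ is small enough, $\Dx_0$ being fixed). Then $\Dx_L^{-\frac{q}{q-1}\beta}=\varepsilon^{-\frac{p\beta}{s(q-1)}}$, hence $W_L^{\text{MLMC}}\simeq\varepsilon^{-\frac{p\beta+s}{s(q-1)}}$, so that $\varepsilon\simeq\left(W_L^{\text{MLMC}}\right)^{-\frac{s(q-1)}{p\beta+s}}$. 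Since $\varepsilon$ bounds the $q$-th power of the $\mathrm{L}^q(\Omega;\mathrm{L}^p(D))$-error (cf.\ \eqref{eqn: MLMCFVM error estimate} and \Cref{lem: Optimization of MLMC work}), the error is $\leq C\varepsilon^{1/q}\leq C\left(W_L^{\text{MLMC}}\right)^{-\frac{s(q-1)}{q(p\beta+s)}}$. It then remains to rewrite the exponent: using $p\beta+s=\frac{wp(q-1)}{q}+s\frac{\widetilde p-p}{\widetilde p}$ and cancelling a factor $q-1$ from numerator and denominator,
\begin{equation*}
  \frac{s(q-1)}{q(p\beta+s)}=\frac{s}{wp+s\frac{\widetilde p-p}{\widetilde p}\frac{q}{q-1}},
\end{equation*}
which is exactly \eqref{eqn: Error in terms of work for MLMC}.

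I expect the difficulty to be purely bookkeeping: keeping track of which quantities are held fixed ($\Dx_0$, $t$) versus sent to a limit, invoking each hypothesis at the precise point where the geometric series is truncated and the ``$1$'' is dropped, checking that $\varepsilon-\Dx_L^{sq/p}\simeq\varepsilon$ is consistent with the balancing choice, and carrying out the final exponent algebra cleanly. None of these steps should pose a conceptual obstacle beyond those already encountered in the MCFVM work estimate leading to \eqref{eqn: Error rate in terms of work for MCFVM}.
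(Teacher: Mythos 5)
Your proposal is correct and follows essentially the same route as the paper's proof: both start from the minimal-work formula of \Cref{lem: Optimization of MLMC work}, use $w\frac{q-1}{q}-\frac{s}{\widetilde p}>0$ to replace the geometric sum by its last term, invoke the hypothesis $\Dx_L^{\frac{s}{\widetilde p}\frac{q}{q-1}-w}>\Dx_0^{-w}$ to discard the constant term, balance $\varepsilon\simeq\Dx_L^{sq/p}$, and perform the same exponent algebra (the paper expresses everything in $\Dx_L$ rather than $\varepsilon$, and drops the constant at a slightly later stage, but these are cosmetic differences). The resulting exponent matches \eqref{eqn: Error in terms of work for MLMC} exactly.
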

\begin{proof}
Since $\left( w\frac{q-1}{q}-\frac{s}{\widetilde{p}} \right)>0$ the sums in the expression for $W_M^{\text{MLMC}}$ from \Cref{lem: Optimization of MLMC work} are dominated by $2^{L\left( w\frac{q-1}{q}-\frac{s}{\widetilde{p}} \right)}$. Choosing $\varepsilon = 2 \Dx_L^{\frac{sq}{p}}$ and using that $\Dx_L^{\frac{s}{\widetilde{p}}\frac{q}{q-1}-w} > \Dx_0^{-w}$ in the last step, we find
\begin{align*}
  W_L^{\text{MLMC}} &\simeq \Dx_0^{-w} \left( 1+ \Dx_0^{\frac{s}{\widetilde{p}}} 2^{L\left(w\frac{q-1}{q} -\frac{s}{\widetilde{p}}\right)} \right) \left( \frac{1+\Dx_0^{\frac{s}{\widetilde{p}}} 2^{L\left(w\frac{q-1}{q} -\frac{s}{\widetilde{p}}\right)}}{\Dx_L^{\frac{sq}{p}}} \right)^{\frac{1}{q-1}}\\
  &\simeq \Dx_0^{-w} \Dx_L^{-\frac{sq}{p(q-1)}} \left( 1+ \Dx_0^{\frac{s}{\widetilde{p}}} 2^{L\left(w\frac{q-1}{q} -\frac{s}{\widetilde{p}}\right)} \right)^{\frac{q}{q-1}}\\
  &\simeq \Dx_L^{-\frac{sq}{p(q-1)}} \left( \Dx_0^{-w}+ \Dx_L^{\frac{s}{\widetilde{p}}\frac{q}{q-1} -w} \right)\\
  &\simeq \Dx_L^{s\left(\frac{1}{\widetilde{p}}-\frac{1}{p}\right)\frac{q}{q-1} -w}\mper
\end{align*}
Thus, we have
\begin{equation*}
  \norm{\bbE[u(\cdot,t)] - E^L[U(\cdot,t)]}_{\mathrm{L}^2(\Omega;\mathrm{L}^1(D))} = \varepsilon^{\frac{1}{q}} \simeq \Dx_L^{\frac{s}{p}} \simeq \left( W_L^{\text{MLMC}} \right)^{- \frac{s}{wp + s\frac{\widetilde{p}-p}{\widetilde{p}}\frac{q}{q-1}}} \mper
\end{equation*}
\end{proof}
Since $\frac{(\widetilde{p}-p)}{\widetilde{p}}$ and $\frac{q}{(q-1)}$ are nonnegative, we have
\begin{equation*}
	\frac{s}{wp + s\frac{\widetilde{p}-p}{\widetilde{p}}\frac{q}{q-1}} \leq \frac{s}{wp}
\end{equation*}
and thus the error rate in terms of the computational work~\eqref{eqn: Error in terms of work for MLMC} of the MLMCFVM is worse than the error rate~\eqref{eqn: Error rate in terms of work for FVM} for the deterministic scheme.
However, since $\frac{\widetilde{p}-p}{\widetilde{p}} \leq 1-\frac{p}{q}\leq 1$, we have
\begin{equation*}
	\frac{s}{wp + s\frac{\widetilde{p}-p}{\widetilde{p}}\frac{q}{q-1}} \geq \frac{s}{wp + s\frac{q}{q-1}}
\end{equation*}
and thus the error rate~\eqref{eqn: Error in terms of work for MLMC} of the MLMCFVM constitutes an improvement over the (single-level) MCFVM, cf. \eqref{eqn: Error rate in terms of work for MCFVM}.

Note that, in particular, for $p=1$ and $r\geq 2$ (which implies $q=2$ and $\widetilde{p}=2$), and taking into account that $w=2$ and $s=\frac{1}{2}$, the error rate~\eqref{eqn: Error in terms of work for MLMC} reads
\begin{equation*}
	\norm{\bbE[u(\cdot,t)] - E^L[U(\cdot,t)]}_{\mathrm{L}^2(\Omega;\mathrm{L}^1(D))} \leq C 
	\left(W_L^{\text{MLMC}}\right)^{-\frac{1}{5}}\mper
\end{equation*}

\section{Numerical experiments}
\label{sec: numexp}

In this section, we present numerical experiments motivated by two-phase flow in a heterogeneous porous medium\footnote{The code used to produce these experiments can be fount at \url{https://github.com/adrianmruf/MLMC_discontinuous_flux}}. The time evolution of the oil saturation $u\in[0,1]$ can be modeled by~\eqref{eqn: Deterministic Cauchy problem} where the flux is given by
\begin{equation}
	f(k(x),u) = \frac{\lambda_{\text{o}}(u)}{\lambda_{\text{o}}(u) + \lambda_{\text{w}}(u)} (1- k(x)\lambda_{\text{w}}(u)),
	\label{eqn: Buckley--Leverett flux}
\end{equation}
see~\cite[Ex. 8.2]{holden2015front}.
Here, the functions $\lambda_{\text{o}}$ and $\lambda_{\text{w}}$ denote the phase mobilities/relative permeabilities of the oil and the water phase, respectively. Typically, one uses the simple expressions
\begin{equation*}
	\lambda_{\text{o}}(u) = u^2,\qquad \lambda_{\text{w}}(u) = (1-u)^2
\end{equation*}
which we will also do in the subsequent experiments.
The coefficient $k$ in~\eqref{eqn: Buckley--Leverett flux} corresponds to the absolute permeability of the medium. Since the medium is usually layered to some extent throughout the reservoir and even continuously varying geology is typically mapped onto some grid, the coefficient $k$  is often modeled as a piecewise constant function~\cite{gimse1993note}.

Since numerical experiments for conservation laws where the initial datum or the flux is uncertain have been reported in other works (albeit without spatially discontinuous flux), we will here focus on numerical experiments where the discontinuous coefficient $k$ is subject to randomness.
We consider the initial datum
\begin{equation}
	u_0(x) = \begin{cases}
		0.8, & -0.9<x<-0.2,\\
		0.4, & \text{otherwise}
	\end{cases}
	\label{eqn: initial datum}
\end{equation}
on the spatial domain $D=[-1,1]$ with periodic boundary conditions. \Cref{fig: Examples of admissible fluxes} shows two examples of fluxes of the form~\eqref{eqn: Buckley--Leverett flux} and indicates the relevant domain determined by the initial datum~\eqref{eqn: initial datum}.
\begin{figure}
\centering
\begin{tikzpicture}
	\begin{axis}[xtick={0,1}, extra x ticks={0.4,0.8}, extra tick style={grid=major, gray, grid style={gray, dotted}}]
		\addplot[very thick, skyblue1, samples=50, domain=0:1] {(1-2.3*(1-x)^2)*x^2/(x^2+(1-x)^2)};
		\addplot[very thick, plum1, dashed, samples=50, domain=0:1] {(1-0.7*(1-x)^2)*x^2/(x^2+(1-x)^2)};
	\end{axis}
\end{tikzpicture}
\caption{Two possible fluxes of the form~\eqref{eqn: Buckley--Leverett flux} for $k(x)=0.7$ (dashed line) and $k(x)=2.3$ (straight line)}
\label{fig: Examples of admissible fluxes}
\end{figure}
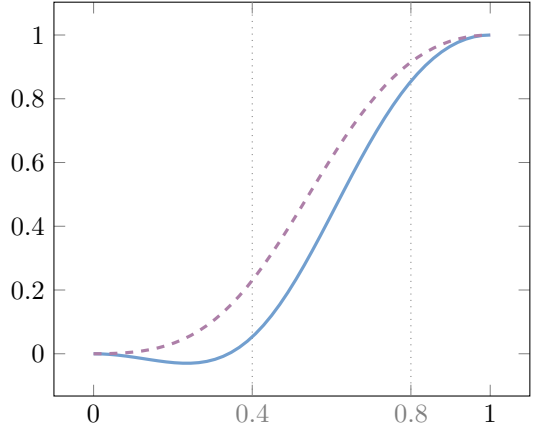
In all experiments we use $\lambda=\frac{\Dt}{\Dx}=0.2$ in the finite volume approximation~\eqref{eqn: FVM}.

When choosing the number of samples for the MLMC estimator we use the formulae~\eqref{eqn: Value of M_0} and~\eqref{eqn: Value of M_l} with $"="$ replacing $"\simeq"$ and rounding to the next biggest integer. Here we use $p=1$, $r=q=2$, $w=2$, $s=\hf$, and $\varepsilon=2\Dx_L^{2s}$ in~\eqref{eqn: Value of M_0} and \eqref{eqn: Value of M_l} \footnote{For example, for $L=7$ and $\Dx_0=2^{-4}$ we use $(M_l)_{l=0}^L = (95646,20107,8454,3555,1495,629,265,112)$ samples.}. 

In order to compute an estimate of the approximation error
\begin{equation*}
	\norm{\bbE[u(\cdot,T)] - E^L[U(\cdot,T)]}_{\mathrm{L}^2(\Omega;\mathrm{L}^1(D))} = \left( \bbE\left[ \norm{\bbE[u(\cdot,T)] - E^L[U(\cdot,T)]}_{\mathrm{L}^1(D)}^2 \right] \right)^\hf
\end{equation*}
we use the root mean square estimator introduced in~\cite{Mishra2012}: We denote by $U_{\text{ref}}(\cdot,T)$ a reference solution and by $(U_i(\cdot,T))_{i=1}^K$ a sequence of independent approximate solutions $E^L[U(\cdot,T)]$ obtained by running the MLMCFVM estimator with $L$ levels $K$ times. Then, we estimate the relative error by
\begin{equation*}
	\mathcal{RMS} = \left( \frac{1}{K} \sum_{i=1}^K \left(\mathcal{RMS}_i\right)^2 \right)^\hf
\end{equation*}
where
\begin{equation*}
	\mathcal{RMS}_i = 100 \times \frac{\norm{U_{\text{ref}}(\cdot,T) - U_i(\cdot,T)}_{\mathrm{L}^1(D)}}{\norm{U_{\text{ref}}(\cdot,T)}_{\mathrm{L}^1(D)}}.
\end{equation*}
Here, as suggested in~\cite{Mishra2012}, we use $K=30$ which was shown to be sufficient for most problems. In order to compute the reference approximation $U_{\text{ref}}(\cdot,T)$ of $\bbE[u(\cdot,T)]$ we take a large number of uniformly-spaced points $(\omega_i)_{i=1}^N$ in $\Omega$ (which in our examples are a closed interval and a rectangle) and compute corresponding finite volume approximations $u_{\Dx^*}(\omega_i;\cdot,T)$ for a very small discretization parameter $\Dx^*$ and then determine $U_{\text{ref}}(\cdot,T)$ by applying the trapezoidal rule to approximate the integral $\int_\Omega u(\omega;\cdot,T)\diff\bbP(\omega)$ using the points $(u_{\Dx^*}(\omega_i;\cdot,T))_{i=1}^N$.

In our experiments we also indicate the approximated standard deviation. To that end, we approximate the variance by
\begin{equation*}
	V_L = \sum_{l=0}^L E_{M_l}\left[ (u_{\Dx_l}(\cdot,T)-u_{\Dx_{l-1}}(\cdot,T) -E_{M_l}[u_{\Dx_l}(\cdot,T) - u_{\Dx_{l-1}}(\cdot,T)])^2 \right].
\end{equation*}

\subsection{Uncertain position of rock layer interface}
For our first numerical experiment we will model the absolute permeability parameter as
\begin{equation*}
	k(x) = \begin{cases}
		1, & x< \xi(\omega),\\
		2, & x> \xi(\omega)
	\end{cases}
\end{equation*}
corresponding to an uncertain position of the interface between two rock types in the reservoir. Here, the random variable $\xi$ is uniformly distributed in $[-0.3,0.3]$.
\begin{figure}[t]
\centering
\subfloat[Two samples of the random entropy solution ($\xi=-0.3$ (straight line), $\xi=0.3$ (dashed line), $\Dx=2^{-9}$).]{
\begin{tikzpicture}
	\begin{axis}[xtick={-1,0,1},ytick={0.4,0.6,0.8},ymin=0.3, ymax=0.9,extra x ticks={-0.3,0.3}, extra tick style={grid=major, gray, grid style={gray, dotted}}]
    	\addplot[skyblue1, very thick] table {Exp1_single_sample_-0.3.txt};
    	\addplot[plum1, dashed, very thick] table {Exp1_single_sample_0.3.txt};
	\end{axis}
\end{tikzpicture}
\label{fig: Experiment 1 single sample}
}
\subfloat[MLMCFVM approximation ($\Dx_0=2^{-4}$, $L=7$).]{
\begin{tikzpicture}
	\begin{axis}[xtick={-1,0,1},ytick={0.4,0.6,0.8},ymin=0.3, ymax=0.9,extra x ticks={-0.3,0.3}, extra tick style={grid=major, gray, grid style={gray, dotted}}]
		\addplot[myorange,name path=mean+std] table {Exp1_mean+std.txt};
    	\addplot[myorange,name path=mean-std] table {Exp1_mean-std.txt};
    	\addplot[fill=myorange] fill between[of=mean+std and mean-std];
    	\addplot[skyblue1,very thick] table {Exp1_mean.txt};
	\end{axis}
\end{tikzpicture}
\label{fig: Experiment 1 MLMCFVM}
}
\caption{Two samples and a MLMCFVM approximation of the (mean of the) random entropy solution for Experiment 1 with $T=0.2$ and $\lambda = 0.2$. The orange area indicates the area between the mean $\pm$ standard deviation. For each sample the discontinuity of $k$ is located in the interval between the dotted lines.}
\end{figure}
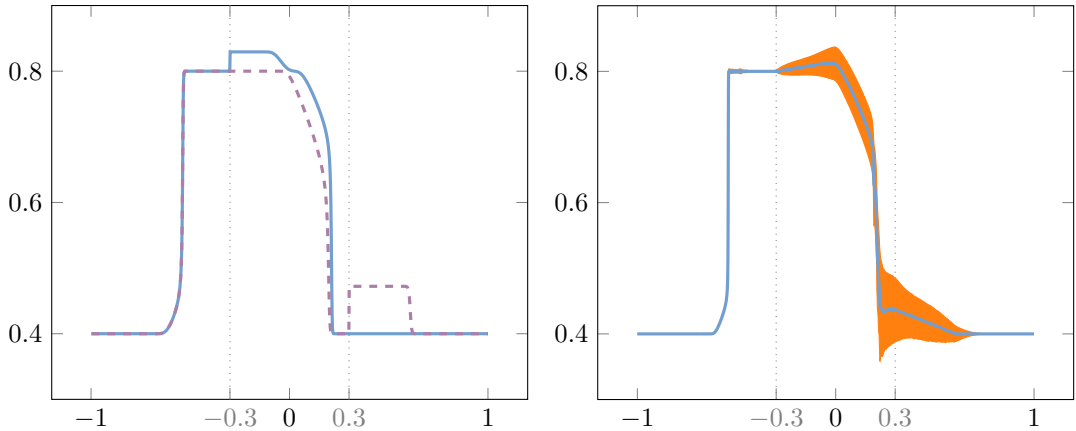
\Cref{fig: Experiment 1 single sample} shows two samples of the approximate random entropy solution (with $\xi = -0.3$ and $\xi=0.3$ respectively) calculated using $2^{10}$ grid points at time $T=0.2$ and \Cref{fig: Experiment 1 MLMCFVM} shows an estimate of the expectation $\bbE[u(\cdot,T)]$ computed by the MLMCFVM with $\Dx_0 = 2^{-4}$ and $L=7$.

\Cref{tab: Experiment 1} and \Cref{fig: convergence rates Experiment 1} show the estimated $\mathcal{RMS}$ error as a function of the number of levels. In particular, \Cref{tab: Experiment 1 rates wrt Dx} shows the observed order of convergence (OOC) with respect to $\Dx_L$ while \Cref{tab: Experiment 1 rates wrt work} shows the observed order of convergence with respect to the computational work calculated based on a best linear fit under the assumptions that $\mathcal{RMS}\sim (\Dx_L)^{r_1}$ and $\mathcal{RMS}\sim (\text{work})^{r_2}$. Here, we use the runtime as a surrogate for the computational work. We observe that in Experiment 1 both rates are better than the rates guaranteed by our convergence analysis.
\begin{table}[t]
\centering
\subfloat[$\mathcal{RMS}$ error versus $\Dx_L$.]{
\begin{tabular}{cccc}
	\toprule
	$L$ & $\Dx_L$ & $\mathcal{RMS}$ & OOC\\
	\midrule
	$1$ & $2^{-5}$  & $4.04$ & \\
	$2$ & $2^{-6}$  & $2.47$ & \\
	$3$ & $2^{-7}$  & $1.44$ & \\
	$4$ & $2^{-8}$  & $0.81$ & \\
	$5$ & $2^{-9}$  & $0.41$ & \\
	$6$ & $2^{-10}$ & $0.17$ & $0.90$\\
	\bottomrule
\end{tabular}
\label{tab: Experiment 1 rates wrt Dx}
}
\hspace{2em}
\subfloat[$\mathcal{RMS}$ error versus work.]{
\begin{tabular}{cccc}
	\toprule
	$L$ & runtime & $\mathcal{RMS}$ & OOC\\
	\midrule
	$1$ & $0.05 $  & $4.04$ & \\
	$2$ & $0.17 $  & $2.47$ & \\
	$3$ & $0.61 $  & $1.44$ & \\
	$4$ & $2.60 $  & $0.81$ & \\
	$5$ & $10.72$  & $0.41$ & \\
	$6$ & $39.64$  & $0.17$ & $-0.46$\\
	\bottomrule
\end{tabular}
\label{tab: Experiment 1 rates wrt work}
}
\caption{$\mathcal{RMS}$ error in Experiment $1$ as a function of the finest grid resolution $\Dx_L$ and as a function of the work (here measured by the runtime in $s$) for various values of $L$ and for $\Dx_0=2^{-4}$.}
\label{tab: Experiment 1}
\end{table}
\begin{figure}
\centering
\subfloat[$\mathcal{RMS}$ error versus $\Dx_L$.]{
\begin{tikzpicture}
\begin{loglogaxis}[
    log basis x={2},
    xlabel={$\Dx_L$},
    log basis y={2},
    ylabel={},
    grid=major,
    grid style={gray, dotted},
    legend pos=north west,
]
\addlegendentry{\scriptsize $\mathcal{RMS}$ error vs. $\Dx_L$}
\addplot[mark=*, skyblue1, thick] coordinates {
(0.03125      , 4.03780262292757)
(0.015625     , 2.46696210462274)
(0.0078125    , 1.43538329841369)
(0.00390625   , 0.810277940630155)
(0.001953125  , 0.409640867466471)
(0.0009765625 , 0.167111507119717)
};
\addlegendentry{\scriptsize $0.902t+6.709$}
\addplot[domain=0.03125:0.0009765625, gray, dashed, thick] {2^(0.901979894028114*log2(x) +6.709106754313173)};
\end{loglogaxis}
\end{tikzpicture}
}
\subfloat[$\mathcal{RMS}$ error versus work.]{
\begin{tikzpicture}
\begin{loglogaxis}[
    log basis x={2},
    xlabel={runtime (s)},
    log basis y={2},
    ylabel={},
    grid=major,
    grid style={gray, dotted},
]
\addlegendentry{\scriptsize $\mathcal{RMS}$ error vs. work}
\addplot[mark=square*, myorange, thick] coordinates {
(0.0475067471211111 , 4.03780262292757)
(0.171219623742222  ,  2.46696210462274)
(0.612535331616667  ,  1.43538329841369)
(2.60163620862444   ,   0.810277940630155)
(10.7249629039178   ,   0.409640867466471)
(39.6374996860422   ,   0.167111507119717)
};
\addlegendentry{\scriptsize $-0.461t+0.133$}
\addplot[domain=0.0638:38.8003, gray, dashed, thick] {2^(-0.460944537265283*log2(x) +0.133411918645686)};
\end{loglogaxis}
\end{tikzpicture}
}
\caption{$\mathcal{RMS}$ error in Experiment 1 as a function of the finest grid resolution $\Dx_L$ and as a function of the work (here measured by the runtime in $s$) corresponding to the values in \Cref{tab: Experiment 1}. The dashed lines indicate the observed order of convergence based on a best linear fit.}
\label{fig: convergence rates Experiment 1}
\end{figure}

To compute the reference solution in Experiment 1, we approximated the expectation with respect to the uniform probability distribution on the interval $[-0.3,0.3]$ using the trapezoidal rule with $N=200$ equidistant points and choosing $\Dx^*=2^{-11}$ for the finite volume approximations.

\subsection{Uncertain absolute permeabilities}

For our second numerical experiment we will model the absolute permeability parameter as
\begin{equation*}
	k(x) = \begin{cases}
		1 + \xi_1(\omega), &x<0,\\
		2 + \xi_2(\omega), &x>0
	\end{cases}
\end{equation*}
corresponding to uncertain absolute permeabilities of two rock layers. Here, the random variables $\xi_1$ and $\xi_2$ are both uniformly distributed in $[-0.3,0.3]$.
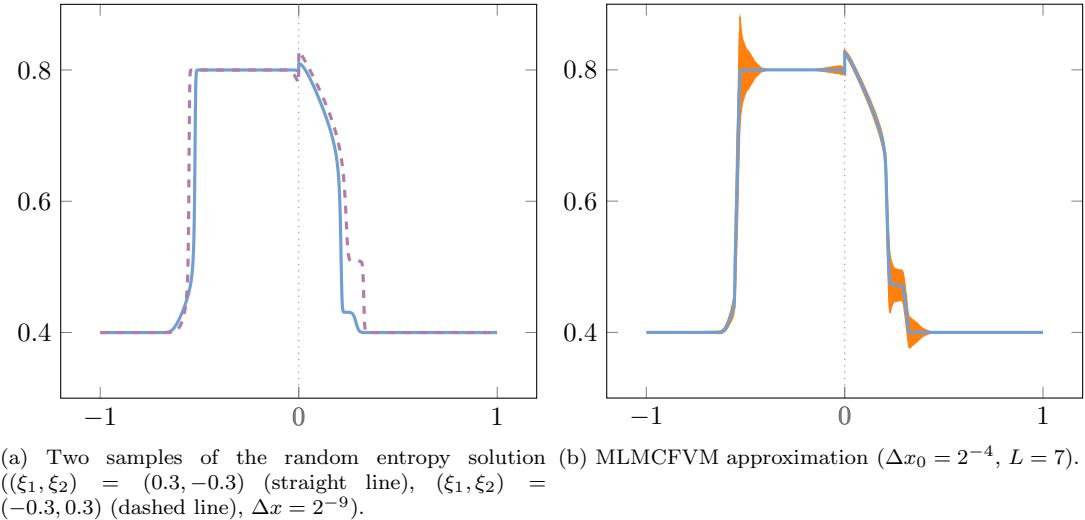
\begin{figure}
\centering
\subfloat[Two samples of the random entropy solution ($(\xi_1,\xi_2)=(0.3,-0.3)$ (straight line), $(\xi_1,\xi_2)=(-0.3,0.3)$ (dashed line), $\Dx=2^{-9}$).]{
\begin{tikzpicture}
	\begin{axis}[xtick={-1,0,1},ytick={0.4,0.6,0.8},ymin=0.3, ymax=0.9,extra x ticks={0}, extra tick style={grid=major, gray, grid style={gray, dotted}}]
    	\addplot[skyblue1, very thick] table {Exp2_single_sample_0.3_-0.3.txt};
    	\addplot[plum1, dashed, very thick] table {Exp2_single_sample_-0.3_0.3.txt};
	\end{axis}
\end{tikzpicture}
\label{fig: Experiment 2 single sample}
}
\subfloat[MLMCFVM approximation ($\Dx_0=2^{-4}$, $L=7$).]{
\begin{tikzpicture}
	\begin{axis}[xtick={-1,0,1},ytick={0.4,0.6,0.8},ymin=0.3, ymax=0.9,extra x ticks={0}, extra tick style={grid=major, gray, grid style={gray, dotted}}]
		\addplot[myorange,name path=mean+std] table {Exp2_mean+std.txt};
    	\addplot[myorange,name path=mean-std] table {Exp2_mean-std.txt};
    	\addplot[fill=myorange] fill between[of=mean+std and mean-std];
    	\addplot[skyblue1, very thick] table {Exp2_mean.txt};
	\end{axis}
\end{tikzpicture}
\label{fig: Experiment 2 MLMCFVM}
}
\caption{Two samples and a MLMCFVM approximation of the (mean of the) random entropy solution for Experiment 2 with $T=0.2$ and $\lambda = 0.2$. The orange area indicates the area between the mean $\pm$ standard deviation and the dotted line marks the (fixed) position of the discontinuity of $k$.}
\end{figure}
\Cref{fig: Experiment 2 single sample} shows two samples of the approximate random entropy solution (with $(\xi_1,\xi_2)=(0.3,-0.3)$ and $(\xi_1,\xi_2)=(-0.3,0.3)$ respectively) calculated using $2^{10}$ grid points at time $T=0.2$ and \Cref{fig: Experiment 2 MLMCFVM} shows an estimate of the expectation $\bbE[u(\cdot,T)]$ computed by the MLMCFVM with $\Dx_0 = 2^{-4}$ and $L=7$.
\begin{table}
\centering
\subfloat[$\mathcal{RMS}$ versus $\Dx_L$.]{
\begin{tabular}{cccc}
	\toprule
	$L$ & $\Dx_L$ & $\mathcal{RMS}$ & OOC\\
	\midrule
	$1$ & $2^{-5}$  & $3.80$ \\
	$2$ & $2^{-6}$  & $2.25$ \\
	$3$ & $2^{-7}$  & $1.34$ \\
	$4$ & $2^{-8}$  & $0.75$ \\
	$5$ & $2^{-9}$  & $0.37$ \\
	$6$ & $2^{-10}$ & $0.15$ & $0.91$ \\
	\bottomrule
\end{tabular}
\label{tab: Experiment 2 rates wrt Dx}
}
\hspace{2em}
\subfloat[$\mathcal{RMS}$ versus work.]{
\begin{tabular}{cccc}
	\toprule
	$L$ & runtime ($s$) & $\mathcal{RMS}$ & OOC\\
	\midrule
	$1$ & $0.05$  & $3.80$ \\
	$2$ & $0.19$  & $2.25$ \\
	$3$ & $0.63$  & $1.34$ \\
	$4$ & $2.70$  & $0.75$ \\
	$5$ & $10.12$  & $0.37$ \\
	$6$ & $38.14$ & $0.15$ & $-0.47$ \\
	\bottomrule
\end{tabular}
\label{tab: Experiment 2 rates wrt work}
}
\caption{$\mathcal{RMS}$ error in Experiment $2$ as a function of the finest grid resolution $\Dx_L$ and as a function of the work (here measured by the runtime in $s$) for various values of $L$ and for $\Dx_0=2^{-4}$.}
\label{tab: Experiment 2}
\end{table}
\begin{figure}
\centering
\subfloat[$\mathcal{RMS}$ error versus $\Dx_L$.]{
\begin{tikzpicture}
\begin{loglogaxis}[
    log basis x={2},
    xlabel={$\Dx_L$},
    log basis y={2},
    ylabel={},
    grid=major,
    grid style={gray, dotted},
    legend pos= north west,
]
\addlegendentry{\scriptsize $\mathcal{RMS}$ error vs. $\Dx_L$}
\addplot[mark=*, skyblue1, thick] coordinates {
(0.03125      , 3.79597398255920)
(0.015625     , 2.24789514438061)
(0.0078125    , 1.33955878114555)
(0.00390625   , 0.746532985297767)
(0.001953125  , 0.374108732635936)
(0.0009765625 , 0.150665644359034)
};
\addlegendentry{\scriptsize $0.911t+6.655$}
\addplot[domain=0.03125:0.0009765625, gray, dashed, thick] {2^(0.910852975405070*log2(x) +6.655402745870175)};
\end{loglogaxis}
\end{tikzpicture}
}
\subfloat[$\mathcal{RMS}$ error versus work.]{
\begin{tikzpicture}
\begin{loglogaxis}[
    log basis x={2},
    xlabel={$\Dx_L$},
    log basis y={2},
    ylabel={},
    grid=major,
    grid style={gray, dotted},
]
\addlegendentry{\scriptsize $\mathcal{RMS}$ error vs. work}
\addplot[mark=square*, myorange, thick] coordinates {
(0.0470449738000000 , 3.79597398255920)
(0.192052041266667 , 2.24789514438061)
(0.632521271666667 , 1.33955878114555)
(2.69513145753333 , 0.746532985297767)
(10.1173227042667 , 0.374108732635936)
(38.1413222723000 , 0.150665644359034)
};
\addlegendentry{\scriptsize $-0.472t+0.026$}
\addplot[domain=0.0638:38.8003, gray, dashed, thick] {2^(-0.471782533443006*log2(x) +0.026190490642970)};
\end{loglogaxis}
\end{tikzpicture}
}
\caption{$\mathcal{RMS}$ error in Experiment 2 as a function of the finest grid solution $\Dx_L$ and as a function of the work (here measured by the runtime in $s$) corresponding to the values in \Cref{tab: Experiment 2}. The dotted lines indicate the observed order of convergence based on a best linear fit.}
\label{fig: convergence rates Experiment 2}
\end{figure}

\Cref{tab: Experiment 2} and \Cref{fig: convergence rates Experiment 2} again show the root mean square error estimate and the observed order of convergence with respect to $\Dx_L$ and with respect to the computational work. As before, we observe that the observed convergence rates are better than the theoretical bounds.

In order to compute a reference solution for Experiment 2, we used a tensorized trapezoidal rule with $60\times 60$ points in the stochastic domain $[-0.3,0.3]^2$ and $\Dx^*=2^{-11}$ for the finite volume approximations.

\section{Conclusion}\label{sec: conclusion}
In this paper, we have considered conservation laws with discontinuous flux where the model parameters, i.e., the initial datum, the flux function, and the discontinuous spatial dependency coefficient, are uncertain.
Based on adapted entropy solutions for the deterministic case, we have introduced a notion of random entropy solutions and have proved well-posedness.

To numerically approximate the mean of a random entropy solution, we have proposed Monte Carlo methods coupled with a class of finite volume methods suited for conservation laws with discontinuous flux. Our convergence analysis includes convergence rate estimates for the Monte Carlo and multilevel Monte Carlo finite volume method. Further, we have provided error versus work rates which show that the multilevel Monte Carlo finite volume method is much faster than the (single-level) Monte Carlo finite volume method.

We have presented numerical experiments motivated by two-phase flow in heterogeneous porous media, e.g., oil reservoirs with different rock layers. The numerical experiments verify our theoretical results concerning convergence rates of the multilevel Monte Carlo finite volume method.

As a possible direction of future research, we want to mention that -- from a practical standpoint -- it would be desirable to design multilevel Monte Carlo finite volume methods based on finite volume methods that require no processing of the flux discontinuities. Such numerical methods have been considered in~\cite{TOWERS20205754,ghoshal2020convergence}, however, there are currently no convergence rate results available for these methods.


\bibliographystyle{siam}

\end{document}